\newcommand\sC{{\mathcal C}}
\newcommand\la{\lambda}
\newcommand\be{\beta}
\newcommand\s{\sigma}
\newcommand\de{\delta}
\DeclareMathOperator{\Pic}{Pic}
\def\Bbb{\bf}
\newcommand{\CC}{\ensuremath{\mathbb{C}}}
\newcommand{\ZZ}{\ensuremath{\mathbb{Z}}}
\newcommand{\QQ}{\ensuremath{\mathbb{Q}}}
\newcommand{\PP}{\ensuremath{\mathbb{P}}}
\newcommand{\F}{\ensuremath{\mathbb{F}}}
\def\eea{\end{eqnarray*}}
\def\bea{\begin{eqnarray*}}
\newcommand\dual{\mathrel{\raise3pt\hbox{$\underline{\mathrm{\thinspace d
\thinspace}}$}}}
\newcommand\qe{\ifhmode\unskip\nobreak\fi\quad $\Box$}       
\def\BOX{\hfill\lower.5\baselineskip\hbox{$\Box$}}
\newcommand\Q{\Bbb Q}
\newtheorem{theorem}{Theorem}
\newtheorem{theo}[theorem]{Theorem}
\newtheorem{rem}[theorem]{Remark}
\newtheorem{prop}[theorem]{Proposition}
\newtheorem{cor}[theorem]{Corollary}
\newtheorem{lemma}[theorem]{Lemma}
\newtheorem{example}[theorem]{Example}
\newenvironment{ex}{\begin{example}\rm}{\end{example}}
\newtheorem{claim}[theorem]{Claim}
\newtheorem{fact}[theorem]{Fact}
\newtheorem{main-claim}[theorem]{Main Claim}
\theoremstyle{definition}
\newtheorem{defin}[theorem]{Definition}
\def\tagform@#1{\maketag@@@{\ignorespaces#1\unskip\@@italiccorr}}
\newcolumntype{H}{@{}>{\lrbox0}l<{\endlrbox}} 
\begin{document}

\title[Singularities of quartic surfaces III]{Singularities  of  normal quartic  surfaces III (char=2, non-supersingular)}
\author{Fabrizio Catanese}
\address{Lehrstuhl Mathematik VIII, 
 Mathematisches Institut der Universit\"{a}t
Bayreuth, NW II\\ Universit\"{a}tsstr. 30,
95447 Bayreuth, Germany \\ and Korea Institute for Advanced Study, Hoegiro 87, Seoul, 
133--722.}
\email{Fabrizio.Catanese@uni-bayreuth.de}

\author{Matthias Sch\"utt}
\address{Institut f\"ur Algebraische Geometrie, Leibniz Universit\"at
  Hannover, Welfengarten 1, 30167 Hannover, Germany\\ and\;\;\;\;\;\;\;\;\;\;\;\;\;\;\;\;\;\;\;\;\;\;\;\;\;\;\;\;\;\;\;\;\;\;\;\;\;
  \linebreak
  Riemann Center for Geometry and Physics, Leibniz Universit\"at
  Hannover, Appelstrasse 2, 30167 Hannover, Germany}

\email{schuett@math.uni-hannover.de}
\date{\today}

\thanks{AMS Classification: 14J17, 14J25, 14J28, 14N05.\\ 
The first author acknowledges support of the ERC 2013 Advanced Research Grant - 340258 - TADMICAMT}

\maketitle

\begin{abstract}
We show, in this third part,  that the maximal number of singular
points of a normal quartic surface $X \subset \PP^3_K$ 
defined over an algebraically closed field $K$ of characteristic $2$ is at most   $12$,
if the minimal resolution of $X$ is not a supersingular K3 surface.
We also provide a family of explicit examples,  valid in any characteristic.
\end{abstract}

%
%
%

\section{Introduction}

This paper continues our study of normal quartic surfaces $X \subset \PP^3_K$ 
defined over an algebraically closed field $K$ of characteristic $2$.
This was started in \cite{cat21} and continued in \cite{CS}.
In detail, we proved:

\begin{theo}[{\cite[Thm.\ 1]{CS}}]
\label{maintheo}
\label{theo}
A normal quartic surface $X\subset \PP^3$  
contains at most 14 singular points. 
If the maximum number of 14 singularities is attained,
then all singularities are nodes
and the minimal resolution of $X$ is a supersingular K3 surface.
The variety of quartics with $14$ nodes contains 
an irreducible  component, of dimension $24$.
\end{theo}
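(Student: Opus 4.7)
\emph{Proof proposal.} The plan is to bound the number of singularities via lattice theory on the K3 minimal resolution, sharpen the bound using the specific geometry of quartics, and then construct an explicit $24$-dimensional family. First I would show that a normal quartic $X\subset\PP^3$ has only rational double points as singularities (otherwise local invariants violate the constraints coming from $K_X=-H$), so the minimal resolution $\tilde X$ is a smooth K3 surface. Each singularity $p_i$ of ADE type contributes a negative-definite root sublattice $R_i\subset\Pic(\tilde X)$ of rank $r_i$ (the Milnor number), orthogonal to the hyperplane class $H$, which yields
\[
1+\sum_i r_i\;\le\;\rho(\tilde X)\;\le\;22,
\]
since in characteristic $2$ the Picard rank of a K3 surface is at most $22$, with equality iff $\tilde X$ is supersingular. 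In particular the total number of singular points is at most $21$.

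To sharpen this naive bound down to $14$, I would combine discriminant considerations with the geometry of quartics. The sublattice $\langle 4\rangle\oplus\bigoplus_i R_i$ has Gram determinant $\pm 4\prod_i\det(R_i)$; forcing its saturation inside the Picard lattice, which in the supersingular case has discriminant $-2^{2\sigma_0}$ with $1\le\sigma_0\le 10$, imposes strong $2$-adic parity and divisibility conditions on the ADE-types $R_i$ and on the index of the embedding. These conditions already cut the bound down substantially, and complementary classical restrictions on the geometry of nodes on a quartic---such as limits on the number of collinear or coplanar nodes, and splitting behaviour of hyperplane pencils through a given singularity---are then used to rule out the remaining mixed-type configurations. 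When $14$ is attained the only possibility consistent with all the constraints is $r_i=1$ for all $i$ and $\rho(\tilde X)=22$, which proves that all singularities are nodes and $\tilde X$ is supersingular.

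For the last sentence I would exhibit explicit examples of $14$-nodal quartics in characteristic $2$---for instance as symmetroids or as Kummer-type degenerations---and verify that their Zariski closure forms an irreducible component of the expected dimension. The naive expected dimension $\dim|\Oh_{\PP^3}(4)|-14=20$ is beaten by $4$ because a $14$-node configuration on a quartic is very far from generic, the excess reflecting a $4$-parameter freedom in the supersingular K3 moduli input. The hardest step will be the sharpening from the crude lattice bound $21$ down to the actual value $14$: the pure lattice arithmetic admits many abstract root configurations, and one needs genuine geometric input on quartic surfaces---most effectively, projection from a node to a double cover of $\PP^2$, or an analysis of tropes---to exclude the intermediate cases. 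Matching deformations of the singular quartic with those of the polarized supersingular K3 lattice for the dimension count is also delicate and will require careful crystalline arguments.
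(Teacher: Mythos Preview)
This theorem is not proved in the present paper; it is quoted from the authors' previous paper \cite{CS}. Nonetheless, the present paper recalls enough of the machinery from \cite{CS} (Section~\ref{s:g=1}, Proposition~\ref{lem:12}, Table~\ref{tab}) to see what the actual argument is, and it is quite different from your proposal.

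The approach in \cite{CS} is geometric and specific to characteristic $2$: any two singular points $P,Q$ determine a pencil of hyperplanes, and its pull-back to the K3 resolution $S$ is a genus one fibration $S\to\PP^1$ in which the remaining singular points yield \emph{disjoint} smooth rational curves in the fibres. The bound then comes from wild ramification: for each additive fibre in characteristic $2$ the index of wild ramification $\delta_v$ is positive, and a fibre-by-fibre count (the inequality $N_v\le\frac12(e(F_v)+\delta_v)$ of \eqref{eq:N_v}) combined with $\sum_v(e(F_v)+\delta_v)=24$ shows that at most $12$ disjoint $(-2)$-curves can be supported on the fibres. Hence $|\mathrm{Sing}(X)|\le 2+12=14$. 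The refinement to ``all nodes'' and ``supersingular'' in the equality case, and the $24$-dimensional family, are then obtained by a finer analysis of which fibre types attain equality and by explicit construction.

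Your proposal misses this mechanism entirely. The crude lattice bound $\rho\le 22$ gives $21$, and you then appeal to ``discriminant considerations'' and ``classical restrictions on collinear/coplanar nodes'' to descend to $14$, but you never identify a concrete obstruction that distinguishes $14$ from $15,\dots,21$. Discriminant-group arguments alone do not: $A_1^{21}$ embeds perfectly well into even hyperbolic lattices of rank $22$ with small $2$-length, so there is no purely arithmetic obstruction at the lattice level. The characteristic-$2$ input that actually does the work---wild ramification forcing $N_v\le\frac12(e(F_v)+\delta_v)$---appears nowhere in your outline. Two smaller slips: for a quartic $X\subset\PP^3$ one has $K_X=0$, not $K_X=-H$; and it is \emph{not} true that every normal quartic has only rational double points---the paper only asserts this once there are at least $13$ singular points (see \cite[Prop.~14]{CS}).
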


Recall that a K3 surface is supersingular if
it has maximum Picard number $\rho=22$ (\cite{artinSS}).
The present paper concerns the non-supersingular case.
In this case, Theorem \ref{theo} implies that $X$ contains at most 13 singular points.
Our main result improves this to the following sharp bound:

\begin{theo}
\label{thm}
Let $S$ be the minimal resolution of a normal quartic surface $X$.
If $S$ is not a supersingular K3 surface, then $X$ contains at most 12 singular points.
If there are 12 singular points, then they all have types $A_1$ or $A_2$,
and there are at most 3 $A_2$'s.
The variety of quartics with $12$ nodes contains 
an irreducible  component, of dimension $22$, such that generically $S$ is a non-supersingular K3 surface.
\end{theo}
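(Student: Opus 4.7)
The plan is to split the proof into three parts: (i) the upper bound of $12$ singular points in the non-supersingular case, (ii) the restriction on singularity types when $12$ are attained, and (iii) the construction of the $22$-dimensional family. The starting point throughout is Artin's dichotomy for K3 surfaces in positive characteristic: one has either $\rho(S)\le 20$ or $\rho(S)=22$, with the latter exactly the supersingular case. If $X$ has singularities of ADE types $A_{n_1},\ldots,A_{n_k}$, then the pull-back $H$ of the hyperplane class together with the exceptional curves over the singularities generates a sublattice $L\subset\Pic(S)$ of rank $1+\sum n_i$. The crude rank bound $1+\sum n_i\le 20$ does not yet rule out $13$ nodes; the finer input is the Nikulin--Shioda code of even sets of nodes, which encodes the overlattices of $L$ inside $\Pic(S)$ and forces relations among the singular points.

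To rule out $13$ singular points I would combine Theorem~\ref{maintheo} with an analysis via projection from one of the singularities. Projecting $X$ from a chosen singular point $p$ yields a generically degree-$2$ rational map to $\PP^2$; the remaining $12$ singularities of $X$ project to singular points of the branch locus (or of its characteristic-$2$ analogue arising from the wildly ramified cover), and further contributions come from the tangent cone at $p$. A bound on the total $\delta$-invariant of this plane sextic contradicts the existence of $13$ singularities. The same projection analysis applied when $k=12$ shows that an $A_n$-singularity with $n\ge 3$ would project to a singularity of $\delta$-invariant too large to fit alongside the remaining $11$, ruling out such singularities in the $12$-case. The bound of at most three $A_2$'s then follows from a finer divisibility argument involving the $3$-torsion in the discriminant group of $L$, which becomes incompatible with the lattice structure of a non-supersingular $\Pic(S)$ when too many $A_2$'s are present.

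For part (iii), I would exhibit an explicit family of nodal quartics---for instance via a symmetric-determinantal construction or one built from a pencil of quadrics, both classically known to produce many nodes and sensibly defined in characteristic $2$. The dimension count is immediate: quartic surfaces form a $34$-dimensional linear system, imposing a node at a prescribed point costs $4$ conditions, and moving each node introduces $3$ parameters, so $12$ general nodes impose $12$ net conditions and yield a component of dimension at least $22$. Generic non-supersingularity would be established by exhibiting one explicit member with $\rho(S)<22$, e.g.\ via reduction mod~$p$ and an explicit count of divisor classes, together with upper semicontinuity of the Picard rank.

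The main obstacle I anticipate is part~(i): adapting Nikulin-style even-set arguments to characteristic~$2$, where Hodge-theoretic tools are unavailable and the double cover obtained by projecting from a node is inseparable or wildly ramified, requires a careful case analysis that genuinely uses the combinatorics of the singular configurations inherited from Theorem~\ref{maintheo}. Once that part is in place, (ii) is a refinement of the same projection/lattice bookkeeping, and (iii) is an explicit verification with a parameter count.
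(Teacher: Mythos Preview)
Your proposal is a plan rather than a proof, and the route you sketch diverges sharply from what the paper does; more importantly, the key steps you would rely on have genuine gaps in characteristic~$2$.

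\medskip

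\textbf{Part (i).} Projection from a node in characteristic~$2$ does not produce a separable double cover of $\PP^2$ branched along a sextic; the map is purely inseparable (the tangent cone at a node is a double plane), so there is no branch curve whose $\delta$-invariants you can bound. You acknowledge this (``wildly ramified cover'', ``characteristic-$2$ analogue'') but do not say what structure replaces the branch sextic or why any numerical bound survives. Likewise, the Nikulin code of even sets of nodes is grounded in the transcendental lattice and Hodge theory; you flag this obstacle but offer no substitute. As written, part (i) has no argument.

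The paper avoids both issues entirely. It uses the pencil of \emph{hyperplanes} through two singular points to get an elliptic fibration on $S$ with the remaining $11$ singularities contributing disjoint $(-2)$-curves in the fibres. The characteristic-$2$ input is the wild ramification at additive fibres, which forces the inequality $N_v\le\tfrac12(e(F_v)+\delta_v)$ and shows (Proposition~\ref{lem:12}) that fibres support at most $12$ disjoint $(-2)$-curves, with equality only under tight constraints. A short Weierstrass computation (Lemma~\ref{lem:non-reduced}) then forces a non-reduced fibre, hence a non-reduced hyperplane section; translating back to $X$, every pair of nodes lies in a plane cutting out a double conic or two double lines containing six nodes. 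A census (each node in exactly three such planes, each plane containing six nodes) gives $6m=3\cdot 13$, a contradiction. Higher singularities are eliminated by producing a section orthogonal to $11$ fibral $(-2)$-curves and playing this off against the two non-reduced fibres required by Proposition~\ref{lem:=12}.

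\medskip

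\textbf{Part (ii).} Your projection argument inherits the same inseparability problem, and the ``$3$-torsion in the discriminant group'' idea for bounding $A_2$'s is not developed; there is no evident incompatibility with a non-supersingular $\Pic(S)$ from discriminant data alone. The paper instead writes down explicit isotropic vectors (e.g.\ $E=H-(C+2C'+C'')$ for an $A_3$) and invokes Proposition~\ref{prop:general}: thirteen disjoint $(-2)$-curves orthogonal to an isotropic class force the induced genus-one fibration to be quasi-elliptic, hence $S$ supersingular. The bound of three $A_2$'s comes from a more elaborate isotropic vector and Proposition~\ref{lem:12}(iii).

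\medskip

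\textbf{Part (iii).} Your dimension count is fine in spirit, and checking one non-supersingular member is the right idea. The paper gives the concrete family $l_1l_2l_3l_4+q^2=0$ (dimension $22$ in the space of quartics) and verifies non-supersingularity by specializing to a member of the Dwork pencil with $\rho=20$, certified via an explicit twisted cubic and the splitting behaviour of $2$ in $\QQ(\sqrt{-7})$.
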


Naturally Theorem \ref{thm} leads to the question about what is true for other quasi-polarized K3 surfaces
in characteristic $2$.
We will prove some partial results in this direction, for instance the following.

\begin{theo}
\label{theo2}
Let $X$ be a K3 surface in characteristic $2$
with Picard number $18\leq \rho(X)\leq 20$.
Then $X$ contains at most 12 disjoint smooth rational curves.
\end{theo}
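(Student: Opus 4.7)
Let $C_1,\ldots,C_n$ be the pairwise disjoint smooth rational curves on $X$, set $M := \bigoplus_{i=1}^n \ZZ C_i \cong A_1^n \subset NS(X)$, and let $W := M^\perp \cap NS(X)$, a lattice of rank $\rho-n$ and signature $(1,\rho-n-1)$. The plan is to reduce to Theorem~\ref{thm}. Suppose for contradiction $n \geq 13$. The goal is to produce a nef and big class $L \in W$ with $L^2=4$ such that the only $(-2)$-classes in $L^\perp \cap NS(X)$ are the $\pm C_i$ and such that $L$ is not $2$-divisible in $NS(X)$. By Saint-Donat's classical results on linear systems on K3 surfaces, the map $\phi_{|L|}$ is then birational onto a normal quartic $X_0 \subset \PP^3$ whose singular locus consists of exactly $n$ nodes, the images of the $C_i$'s. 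Since $X$ is not supersingular, Theorem~\ref{thm} forces $n \leq 12$, a contradiction.

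Producing such an $L$ is straightforward whenever $\rank W = \rho-n \geq 5$: the lattice $W$ is then even indefinite of rank at least $5$, so by Meyer's theorem combined with standard local-to-global arguments for even lattices, $W$ represents $4$ integrally; a generic such class is not $2$-divisible, has no extra $(-2)$-classes in its orthogonal complement, and can be made nef by reflecting in the Weyl group generated by $(-2)$-curves. This already handles all $(\rho,n)$ with $n \leq \rho-5$, e.g.\ $(\rho,n) \in \{(18,13),(19,13),(19,14),(20,13),(20,14),(20,15)\}$.

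The remaining cases, where $\rank W \leq 4$, require an analysis of the binary doubly-even code $\mathcal C := \tilde M / M \subset \FF_2^n$, with $\tilde M$ the primitive closure of $M$ in $NS(X)$. By Nikulin's classical results, $\mathcal C$ is severely restricted; for $n=16$ it must be the $[16,5,8]$ first-order Reed--Muller code, so that $\tilde M$ is the Nikulin lattice of discriminant $2^6$. Using that $X$ is non-supersingular, $NS(X)$ (and hence $\tilde M$) embeds primitively in the K3 lattice $\Lambda_{K3}$ via a projective lift to characteristic zero, so the discriminant-form constraints on the complement, combined with the rank budget $\rho \leq 20$, either exhibit the sought degree-$4$ polarization in $W$ (reducing again to Theorem~\ref{thm}) or yield a direct lattice-theoretic contradiction. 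The main obstacle, I expect, is the case $n=16$: the corresponding configuration corresponds in characteristic zero to the classical Kummer quartic, and ruling it out in characteristic $2$ uses crucially that the Kummer construction degenerates there (since $\#A[2](\bar K) \leq 4$ rather than $16$ on any abelian surface in characteristic $2$), so that a $16$-node Kummer-type configuration cannot be realized on a non-supersingular K3 surface over characteristic $2$ with $\rho \leq 20$.
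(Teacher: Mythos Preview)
Your reduction to Theorem~\ref{thm} has a genuine gap at the step where you claim that a ``generic'' class $L\in W$ with $L^2=4$ can be made nef while the only $(-2)$-curves in $L^\perp$ remain the $C_i$. When you reflect $L$ into the nef cone via the Weyl group of $(-2)$-curves, a reflection $s_D$ in a curve $D$ meeting some $C_i$ gives $s_D(L).C_i=(L.D)(D.C_i)\neq 0$, so the nef representative $w(L)$ need not lie in $W$. What survives is only that the $(-2)$-classes $w(C_1),\dots,w(C_n)$ sit orthogonally inside the root lattice $R$ spanned by the curves actually contracted by $|w(L)|$; but the singular points of the resulting quartic correspond to the \emph{connected components} of $R$, not to the $w(C_i)$, and an embedding $A_1^{13}\hookrightarrow R$ by no means forces $R$ to have $13$ components. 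Controlling this is precisely the kind of statement encoded in Proposition~\ref{prop:N_v} (and ultimately Theorem~\ref{thm:nodal}), so your shortcut does not avoid the hard work. Two smaller points: Saint-Donat's results require extra care in characteristic~$2$ (hyperellipticity, base points), and your ``remaining cases'' paragraph is unnecessary---if $13$ disjoint $(-2)$-curves are impossible then so are $14,15,16$, and for $n=13$ with $\rho\ge 18$ one already has $\operatorname{rk} W\ge 5$.

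The paper's argument is much shorter and sidesteps all of this. Rather than a square-$4$ polarization, one uses Meyer's theorem to produce a non-trivial \emph{isotropic} vector $E\in W$ (rank $\ge 5$ suffices). The thirteen curves $C_i$ are orthogonal to $E$, so Proposition~\ref{prop:general}---the main result of \S\ref{s:general}, whose proof is exactly where Proposition~\ref{prop:N_v} and Theorem~\ref{thm:nodal} enter---forces the induced genus one fibration to be quasi-elliptic. Then $X$ is unirational, hence supersingular with $\rho=22$, contradicting $\rho\le 20$.
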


Along the way to prove these results,
we also develop a  subtle characteristic-free relation between disjoint smooth rational curves
and fibre components of elliptic fibrations
which may be of independent interest (Proposition \ref{prop:N_v}).

\medskip

\emph{Convention:}
We work over an algebraically closed field $K$,
mostly of characteristic 2, though many results
may also be stated over non-closed fields.

\section{Elliptic fibrations}
\label{s:g=1}

This section reviews parts of the theory of elliptic fibrations on K3 surfaces
used in the proofs of Theorems \ref{thm} and \ref{theo2}.
Usually, in characteristics $2$ and $3$, this encompasses quasi-elliptic fibrations
as well, but since we presently restrict to non-supersingular K3 surfaces,
this will not be necessary for us
(since quasi-elliptic over $\PP^1$ implies unirational which in turn implies supersingular).

Let $X\subset\PP^3$ be a normal quartic with only rational double points as singularities
(we proved in \cite[Prop.\ 14]{CS} that if $X$ has at least 13 singularities,
then they are all rational double points),
of which we fix $P$ and $Q$.
Denote the minimal resolution of $X$ by $S$.
Then the pencil of hyperplanes containing $P$ and $Q$ 
endows $S$ with a genus one fibration
\[
S \to \PP^1.
\]
All other singular points give disjoint fibre components
(or, rather, disjoint ADE-configurations contained in the fibres of $S$).
For singular points not collinear with $P$ and $Q$, this is obvious;
for the other case, observe that, if the line $L=\overline{PQ}$ contains a third singular point, 
then it is contained in $X$ and it is a multiple component of some plane through $P, Q$,
and its strict transform is contained in the corresponding fibre.

Our interest in normal quartics with many singular points
thus leads us to study elliptic fibrations on K3 surfaces
with many disjoint smooth rational fibre components.
Outside characteristic $2$, there can be as many as 16 disjoint smooth rational fibre components
(realized on  Kummer surfaces isogenous to a product).
In characteristic $2$, however, this is prevented by
the wild ramification at additive fibres.

The wild  ramification $\delta_v$ measures  the discrepancy between the Euler number 
$e(F_v)$ of the fibre $F_v$ and the local multiplicity of the discriminant
(which can be computed on the Jacobian fibration).
The following table, reproduced from \cite{CS}, 
lists standard information on the fibres, given both in terms of Kodaira's types and Dynkin types,
namely the number of irreducible components $m_v$ and the Euler number $e(F_v)$.
It also gives bounds for $\delta_v$ from \cite[Prop.~5.1]{SSc}
and the maximal number $N_v$ of disjoint (-2)-fibre components
which can be inferred directly from inspecting  the corresponding extended Dynkin diagrams.

 \begin{table}[ht!]
 \begin{tabular}{c||c|c|c|c|c|c|c|c|c}
 fibre type & I$_n$ & II & III & IV & I$^*_n \; (n\neq 1)$ & I$^*_1$ & IV$^*$ & III$^*$ & II$^*$\\
 \hline
 \hline
 Dynkin type & $A_{n-1}$ & $A_0$ & $A_1$ & $A_2$ & $D_{n+4}$ & $D_5$ & $E_6$ & $E_7$ & $E_8$\\
 \hline
 $m_v$ & $n$ & 1 & 2 & 3 & $n+5$ & 6 & 7 & 8 & 9\\
 \hline
 $\delta_v$ & 0 & $\geq 2$ & $\geq 1$ & 0 & $\geq 2$ & 1 & 0 & $\geq 1$ & $\geq 1$\\
  \hline
 $e(F_v)$ & $n$ & 2 & 3 & 4 & $n+6$ & 7 & 8 & 9 & 10\\
  \hline
 $N_v$ & $\lfloor \frac{n}{2}\rfloor$ & 0 & 1 & 1 & $4 + \lfloor \frac{n}{2}\rfloor$ & 4 & 4 & 5 & 5\\
 \end{tabular}
 \caption{Singular fibre data in characteristic $2$}
 \label{tab}
 \end{table}

Note that, by inspection of the table,
\begin{eqnarray}
\label{eq:N_v}
N_v \leq \frac 12 (e(F_v) + \delta_{v}).
\end{eqnarray}
Summing over all singular fibres, one obtains the following strong restrictions:

\begin{prop}[{\cite[Prop.\ 22, Cor.\ 24 \& 25]{CS}}]
\label{lem:12}
\label{cor:12fibres}
\begin{enumerate}
\item[(i)]
 In characteristic 2, on an elliptic K3 surface the singular fibres contain at most 12 disjoint $(-2)$-curves.
 \item[(ii)]
 If the fibres of an elliptic K3 surface in characteristic $2$ contain 12 disjoint $(-2)$-curves,
then the only possible singular fibre types are
 (with minimum possible $\delta_v$ each)
 \[
\mathrm I_{2n} \; (n>0), \;\; \mathrm I_{2n}^*\; (n\geq 0), \;\;  \mathrm I_1^*,\;\;   \mathrm{IV}^*,\;\;\mathrm{III}^*.
\]
\item[(iii)]
 If the fibres of an elliptic K3 surface in characteristic $2$ support 12 disjoint ADE-configurations
 of smooth rational curves,
then each has type $A_1$.
\end{enumerate}
\end{prop}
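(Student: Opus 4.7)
The plan is to derive all three parts from the local inequality \eqref{eq:N_v} together with Ogg's formula for the Euler number of an elliptic surface, which on a K3 reads
\[
\sum_v \bigl(e(F_v) + \delta_v\bigr) \;=\; e(S) \;=\; 24.
\]
(The singular fibre types and wild conductors coincide with those of the Jacobian fibration, so this applies even without a section.) Summing \eqref{eq:N_v} then gives
\[
\sum_v N_v \;\leq\; \tfrac{1}{2}\sum_v\bigl(e(F_v)+\delta_v\bigr) \;=\; 12,
\]
which is part (i).

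For part (ii), equality $\sum_v N_v = 12$ forces every local instance of \eqref{eq:N_v} to be saturated and every $\delta_v$ to attain the minimum listed in Table~\ref{tab}. A row-by-row comparison of $N_v$ with $\tfrac{1}{2}(e(F_v)+\delta_v^{\min})$ rules out $II$, $III$, $IV$, $II^*$ and all $I_n$, $I_n^*$ with $n$ odd (each leaves strictly positive slack), and confirms equality exactly for the types $I_{2n}$ ($n>0$), $I_{2n}^*$ ($n \geq 0$), $I_1^*$, $IV^*$, $III^*$ at the minimal $\delta_v$.

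For part (iii), let $k_v$ denote the number of disjoint ADE configurations of smooth rational curves in $F_v$. Since each such configuration contains a $(-2)$-curve and distinct configurations are pairwise disjoint, selecting one curve per configuration yields $k_v$ pairwise disjoint $(-2)$-curves in $F_v$, so $k_v \leq N_v$. If the fibres support $12$ disjoint ADE configurations in total, then
\[
12 \;\leq\; \sum_v k_v \;\leq\; \sum_v N_v \;\leq\; 12,
\]
forcing the fibres to be as in (ii) and each to contribute the maximum $k_v = N_v$. It remains to show that this maximum is realized only by $A_1$ configurations. For $I_{2n}$, a short ``vertex plus buffer'' count on the cycle $\tilde A_{2n-1}$ (with $s_i$ vertices per configuration and $g_i\geq 1$ buffer vertices between consecutive configurations, satisfying $\sum s_i+\sum g_i = 2n$) shows that $n$ disjoint connected sub-configurations forces $s_i = g_i = 1$ for all $i$. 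For the $\tilde D$ and $\tilde E$ diagrams appearing in (ii), I would go through the cases one at a time and verify that replacing any $A_1$ by a larger ADE configuration always blocks strictly more additional vertices than it gains, so that the packing number drops below $N_v$.

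The main obstacle is the case-by-case combinatorics in part (iii): one has to check uniformly, across the extended Dynkin diagrams $\tilde A_{2n-1}$, $\tilde D_{2n+4}$, $\tilde D_5$, $\tilde E_6$ and $\tilde E_7$, that $A_1$ is the unique cheapest building block, so that any deviation strictly reduces the number of configurations. By contrast, parts (i) and (ii) reduce to one-line arithmetic on Table~\ref{tab} once \eqref{eq:N_v} and Ogg's formula are granted.
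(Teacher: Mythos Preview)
Your proposal is correct and follows precisely the route the paper indicates: the present paper does not reprove this proposition in detail but cites it from \cite{CS}, merely recording inequality~\eqref{eq:N_v} and remarking that ``summing over all singular fibres, one obtains the following strong restrictions.'' Your argument is exactly that sum, combined with Ogg's formula $\sum_v(e(F_v)+\delta_v)=e(S)=24$, and then the equality analysis row by row on Table~\ref{tab}.

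Two small remarks. First, in part~(ii) your phrase ``all $I_n^*$ with $n$ odd'' overshoots: the table singles out $I_1^*$ with $\delta_v=1$ (rather than $\geq 2$), which is why it survives while $I_{2n+1}^*$ for $n\geq 1$ does not --- you already list $I_1^*$ among the survivors, so this is only a wording slip. Second, your part~(iii) is honest about leaving the $\tilde D$/$\tilde E$ packing verifications as a case check; the paper later (Fact~\ref{fact}) records a sharpened inequality $N_v^{(i)}\leq\frac12(e(F_v)+\delta_v-i)$ which encapsulates exactly this kind of check, but that refinement is not invoked for Proposition~\ref{lem:12} itself, which is simply imported from \cite{CS}.
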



Using explicit calculations with the Weierstrass form of the Jacobian fibration
(which has the same configuration of singular fibres),
we then established the following characterization of
elliptic K3 surfaces whose fibres contain 12 disjoint smooth rational curves:

\begin{prop}[{\cite[Prop.\ 26]{CS}}]
\label{lem:=12}
Let $X$ be an elliptic K3 surface such that there are 
 12 disjoint $(-2)$-curves contained in the fibres.
Then $X$ is supersingular or 
there are two additive fibres.
\end{prop}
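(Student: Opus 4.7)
The strategy is to pass to the Jacobian fibration $J\to\PP^1$---which shares the singular fibre configuration of $X$ and is supersingular iff $X$ is---and to run a case analysis on the number of additive fibres using the explicit Weierstrass form in characteristic $2$. Since $X$ is non-supersingular, the fibration is elliptic rather than quasi-elliptic (else $X$ would be unirational, hence supersingular), and the generic fibre is ordinary; hence $J$ admits a Weierstrass presentation
\[
y^2 + xy = x^3 + a_2(t)\, x^2 + a_6(t),
\]
with $a_2\in H^0(\PP^1,\hol(4))$ and $a_6\in H^0(\PP^1,\hol(12))$. A direct computation gives $\Delta = a_6$, so singular fibres lie over the zeros of $a_6$: multiplicative where $a_2$ does not vanish, additive where it does. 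By Proposition~\ref{lem:12}(ii) every multiplicative fibre is of even type $I_{2n}$.

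\emph{Case A (no additive fibre).} Then every zero of $a_6$ has even multiplicity, and the Euler characteristic relation $\sum 2n_i = 24$ makes the order at infinity even as well, so $a_6 = b^2$ for some $b\in H^0(\PP^1,\hol(6))$. In characteristic $2$ this is precisely the condition that $a_6$ lies in the image of Frobenius on $K[t]$, and the point $(x,y) = (0, b(t))$ defines a $2$-torsion section of $J$. The resulting structural restriction on the Weierstrass equation, together with the explicit form of $a_2$ imposed by the $I_{2n_i}$-configuration, is fed into a Hasse--Witt / Shioda--Tate calculation showing $\rho(J) = 22$, i.e.\ $J$ is supersingular.

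\emph{Case B (exactly one additive fibre).} Normalise the additive fibre to lie over $t = 0$; its type, among $I_{2n}^*$, $I_1^*$, $IV^*$, $III^*$, prescribes specific valuations of $a_2$ and $a_6$ at $0$ via Tate's algorithm, and the remaining zeros of $a_6$ still come with even multiplicities by the $I_{2n_i}$-hypothesis. Hence $a_6 = t^m\, c(t)^2$ for an $m$ depending on the additive type, and a Weierstrass/Hasse--Witt analysis entirely analogous to Case A, carried out case by case for each additive type, again forces $\rho(J)=22$.

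\emph{Main obstacle.} The crux is the explicit Weierstrass/Hasse--Witt step in Cases A and B: having pinned $a_6$ down to (nearly) a square, one must show that this squareness, combined with the limited freedom left in $a_2$, forces the formal Brauer group of $J$ to have infinite height---equivalently $\rho(J)=22$. The threshold is sharp because the presence of a second additive fibre destroys the parity argument that makes $a_6$ a square, re-opening the moduli enough to permit genuinely non-supersingular examples.
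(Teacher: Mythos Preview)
Your overall strategy---pass to the Jacobian, set up a Weierstrass model, and run a case analysis on the number of additive fibres---is the same as the paper's. However, the specific Weierstrass normalization you invoke is wrong, and this error makes Case~B incoherent.

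The form $y^2 + xy = x^3 + a_2 x^2 + a_6$ (i.e.\ $a_1=1$, $a_3=a_4=0$) is only available as a \emph{global} minimal model on $\PP^1$ when $a_1$ is a nonzero constant. ``Generic fibre ordinary'' only says that $a_1\not\equiv 0$ as a section of $\hol_{\PP^1}(2)$; it does not say $a_1$ is nowhere-vanishing. In characteristic~$2$ the additive fibres are exactly the common zeros of $a_1$ and $a_3$, so with $a_1\equiv 1$ there are no additive fibres at all. Hence your Case~B cannot even be formulated in the model you have chosen: you cannot place an additive fibre at $t=0$ while keeping $a_1=1$. The actual argument (see the proof of Lemma~\ref{lem:non-reduced}, which refers back to the computation in \cite{CS}) works with the full form \eqref{eq:WF} and, after using the constraints on fibre types, reduces to $a_1=t^2$; the zeros of $a_1$ are where the whole analysis happens, not something to be scaled away. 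Relatedly, the implication ``$X$ non-supersingular $\Rightarrow$ generic fibre ordinary'' is asserted but not argued; if $a_1\equiv 0$ then every singular fibre is additive (since $c_4=a_1^4$ in characteristic~$2$), and you would need to treat that situation separately before your dichotomy even begins.

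Finally, your ``Main obstacle'' is not a residual technicality but the entire content of the proof. You have observed (correctly, in Case~A) that the even-$I_n$ hypothesis forces $\Delta$ to be a square, but you have not shown why this, together with whatever freedom remains in the other coefficients, forces height~$\infty$ (equivalently $\rho=22$). That step is a concrete Weierstrass computation with the \emph{correct} model, and without it the proposal is a reformulation of the claim rather than a proof.
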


Note that the additive fibres have the same types as those in Proposition \ref{lem:12} (ii)
different from I$_{2n}$,
in particular, they are non-reduced.
This will be instrumental for the proof of Theorem \ref{thm}.

\section{Bounding the number of singular points}

Throughout this section we assume
that $X\subset \PP^3$ is a normal quartic containing 13 singular points.
By \cite[Proposition 15]{CS}, all singularities are rational double points,
so the minimal resolution $S$ is a K3 surface
which we assume to be non-supersingular, i.e.\ $\rho(S)\leq 20$.
Note that, in particular, this implies that $S$ and $X$ cannot be unirational
which rules out many cases from \cite{CS} -- especially the quasi-elliptic fibrations.
Hence all genus one fibrations on $S$ are elliptic.
By Theorem \ref{theo} we can restrict to the case where there are exactly 13 singular points.
We first draw some consequences valid for all configurations of singularities.

\subsection{Non-reduced fibre}

As in Section \ref{s:g=1}, consider
an elliptic fibration
\begin{eqnarray}
\label{eq:pi}
S\to\PP^1
\end{eqnarray}
induced by two singular points on $X$.
Recall that this has at least 11 disjoint smooth rational fibre components.

\begin{lemma}
\label{lem:non-reduced}
The fibration \eqref{eq:pi} has a non-reduced fibre.
\end{lemma}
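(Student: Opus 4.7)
The plan is to argue by contradiction: suppose every fibre of $\pi$ is reduced, so each is of Kodaira type $\mathrm I_n$, $\mathrm{II}$, $\mathrm{III}$ or $\mathrm{IV}$. Summing the per-fibre inequality \eqref{eq:N_v} across all singular fibres and using that the right-hand side totals to the degree of the discriminant of the Jacobian fibration, namely $12\chi(\mathcal O_S)=24$ for a K3, gives
\[
2\sum_{v}N_{v}\leq\sum_{v}(e(F_{v})+\delta_{v})=24.
\]
Combined with the hypothesis $\sum_{v}N_{v}\geq 11$, coming from the 11 disjoint ADE-configurations produced by the singular points of $X$ outside $\{P,Q\}$ (and noting that only $A$-type sub-Dynkin diagrams appear in reduced Kodaira fibres), this forces $\sum_{v}N_{v}\in\{11,12\}$.

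The subcase $\sum_{v}N_{v}=12$ is handled immediately by the earlier propositions: Proposition \ref{lem:=12}, together with the non-supersingularity of $S$, produces two additive fibres, and Proposition \ref{lem:12}(ii) restricts the admissible fibre types to $\mathrm I_{2n}$, $\mathrm I_{2n}^{*}$, $\mathrm I_{1}^{*}$, $\mathrm{IV}^{*}$, $\mathrm{III}^{*}$; the additive members of this list are precisely the starred (hence non-reduced) ones, contradicting the reducedness assumption.

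The main obstacle is the remaining subcase $\sum_{v}N_{v}=11$, in which the total slack $\sum_{v}((e(F_{v})+\delta_{v})-2N_{v})=2$ must be absorbed. Tabulating minimum per-fibre slack for the reduced types ($0$ for $\mathrm I_{2k}$, $1$ for $\mathrm I_{2k+1}$, $\geq 2$ for $\mathrm{III}$ or $\mathrm{IV}$, $\geq 4$ for $\mathrm{II}$) confines the configuration to one of: (a) a single $\mathrm{III}$, (b) a single $\mathrm{IV}$, or (c) exactly two $\mathrm I_{\mathrm{odd}}$ fibres, the remaining fibres being $\mathrm I_{2k}$. I would dispatch this as follows. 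If either $P$ or $Q$ is of type $A_{n}$ with $n\geq 3$, then the middle $n-2$ components of its exceptional chain are vertical (they satisfy $\tilde C\cdot E_i=0$ by the fundamental-cycle computation for an $A_n$ rational double point applied to a generic hyperplane through $P$), producing extra disjoint $(-2)$-curves in the fibres and pushing $\sum_{v}N_{v}\geq 12$, which reduces us to the already-dispatched case. Hence $P,Q\in\{A_1,A_2\}$, and the corresponding $E_P,E_Q$ are (bi-)sections of $\pi$; these translate into strong constraints on the component groups of the Jacobian fibration (via multi-sections/$2$-torsion), which together with the Shioda-Tate bound $\sum_{v}(m_{v}-1)\leq 18$ (from $\rho(S)\leq 20$) is used to exclude each of (a), (b), (c) in turn.

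The hard part will be making this last step in the tight case $P,Q\in\{A_1,A_2\}$ genuinely rigorous, since the interaction of bisections with fibre component groups in characteristic $2$ is subtler than in characteristic zero (wild ramification at additive fibres, non-étale $2$-torsion, etc.); this is precisely where one has to use the fact that $S$ is non-supersingular to exclude pathological $2$-torsion behaviour, and is the pivotal technical point of the lemma.
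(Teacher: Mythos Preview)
Your setup and case split match the paper exactly: assume all fibres reduced, use \eqref{eq:N_v} to get $\sum_v N_v\in\{11,12\}$, dispatch $12$ via Propositions \ref{lem:12}(ii) and \ref{lem:=12}, and in the remaining case isolate the three configurations (a) one III with $\delta=1$, (b) one IV, (c) two I$_{\mathrm{odd}}$, everything else I$_{2k}$. So far so good.

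The gap is precisely where you flag it yourself: you do not actually rule out (a), (b), (c). The strategy you sketch --- reduce to $P,Q\in\{A_1,A_2\}$, then use (bi-)sections, component groups, and Shioda--Tate --- does not close. For instance, in case (c) you can take two I$_1$'s together with eleven I$_2$'s; then $\sum_v(m_v-1)=11\leq 18$, so Shioda--Tate gives no obstruction, and the component groups are all trivial or $\ZZ/2\ZZ$, which a bisection does not contradict. Similar low-rank configurations survive in cases (a) and (b). Nothing in your outline exploits the non-supersingularity of $S$ in these three cases, and that is exactly what is needed.

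The paper's argument for (a)--(c) is entirely different and genuinely arithmetic: pass to the Jacobian and its Weierstrass model $y^2+a_1xy+a_3y=x^3+\cdots$ over $k[t]$ with $\deg a_i\leq 2i$. In case (c), the zeros of $a_1$ must give supersingular (not additive) fibres; placing one at $t=\infty$, a lemma of Rams--Sch\"utt forces the $t^{23}$-coefficient of $\Delta$ to vanish, while the two odd-index multiplicative fibres force it to equal $\alpha+\beta\neq 0$. In cases (a) and (b), all local vanishing orders of $\Delta$ are even, so $\Delta$ is a square; after normalising $a_1=t^2$ and applying Tate's algorithm, one expands $\Delta=t^4(a_3'^4+t^5a_3'^3+\cdots)$ and reads off that $\Delta$ square forces $t\mid a_3'$, which is incompatible with type IV and with $\delta=1$ for type III. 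This is the missing ``pivotal technical point'': it is a direct discriminant computation in characteristic $2$, not a torsion or component-group argument.
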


\begin{proof}
If there are at least 12 disjoint smooth rational fibre components,
the statement follows from Proposition \ref{lem:=12}.
Otherwise, 11 disjoint smooth rational fibre components
leave a little more room for fibre types compared to Proposition \ref{cor:12fibres} (ii).
Namely, if we exclude non-reduced fibres,
then Table \ref{tab} and \eqref{eq:N_v} imply that
all fibres have types I$_{2n}$ for varying $n$ (as in Proposition \ref{cor:12fibres} (ii))
except that
\begin{enumerate}
\item[(i)]
either there are 2 I$_n$ fibres with odd $n$ 
\item[(ii-III)]
or there is one fibre of type III (with minimal wild ramification $\delta=1$)
\item[(ii-IV)]
or there is one fibre of type IV.
\end{enumerate}
To rule out all cases, we switch to the Jacobian of \eqref{eq:pi} (with the same singular fibres etc)
and consider its Weierstrass form
\begin{equation}
\label{eq:WF}
y^2 + a_1 xy + a_3 y = x^3 + a_2 x^2 + a_4 x + a_6.
\end{equation}
Here $a_i\in k[t]$, and in order for $S$ (and its Jacobian) to be a K3 surface, one needs $\deg(a_i)\leq 2i$ for each $i$.

A priori, the zeros of $a_1$ give either supersingular fibres (if $a_3\neq 0$ at this point) or additive fibres (if $a_3=0$).
In case (i), there remains only the first alternative  because there cannot be an additive fibre,
and then we can derive a  contradiction  
as follows.
Locate the supersingular fibre at $t=\infty$ and the two multiplicative fibres with odd number of components at $\alpha, \beta$.
By \cite[Lemma 6.4]{RS64-2}, the discriminant $\Delta$ has vanishing $t^{23}$-coefficient,
but spelling this out we get
\[
\Delta = (t+\alpha)^{n_1}(t+\beta)^{n_2} (\hdots)^2 = t^{24} + (\alpha+\beta) t^{23} + \hdots.
\]
By assumption, $\alpha\neq\beta$, so this gives the required contradiction.

In cases (ii-III) and (ii-IV),
we can argue using the fact that the discriminant $\Delta$ is a square, here 
\begin{eqnarray}
\label{eq:Delta}
\Delta = a_3^4+a_1^3a_3^3
+ a_1^4a_4^2
+a_1^4a_2a_3^2
+ a_1^5a_3a_4
+a_1^6a_6.
\end{eqnarray}
This holds since
the vanishing order of $\Delta$ equals the contributions to the Euler number 
at the fibres $I_{2n}$, and these are even; moreover, for type III with $\de =1$  and for type IV, the vanishing order is 4.
Hence all vanishing orders are even, so $\Delta$ is a square.

Since there are no non-reduced fibres by assumption, there is only one additive fibre which we locate at $t=0$.
Arguing exactly as in the proof of Proposition 26 in \cite{CS} (around formula (9))
[which we cited here as Proposition \ref{lem:=12}], 
we can reduce to the case $a_1=t^2$.
Tate's algorithm gives the normal form
 \[
y^2 + t^2 xy + ta_3' y = x^3 + ta_2' x^2 + ta_4'x + t^2 a_6' .
\]
Resolving the singularity shows that we have  type III if and only of $t\nmid a_4'$,
and type IV if and only if $t\mid a_4'$, but $t\nmid a_3'$ (cf.\ \cite[Steps 4 \& 5]{Si3}).

Expand now $\Delta$ as
\[
\Delta = t^4(a_3'^4+t^5a_3'^3+t^6\hdots).
\]
 Thus $\Delta$ could only be square if $t\mid a_3'$
 which is ruled out for type IV and leads to ramification index $\delta_0>2$
 for type III, but this was supposed to be minimal, i.e.\ $\delta_0=1$.
 This gives the required contradiction.

 \end{proof}

\subsection{Nodes as singularities}

We first cover the case where all singular points are nodes.
Since the exceptional curves  above nodes appear with multiplicity one in the fibres, 
we can improve the above results in case $P$ and $Q$ are collinear with another node $R\in X$.

\begin{lemma}
\label{lem:collinear}
If there are 3 collinear nodes,
then the induced fibration has exactly one non-reduced fibre,
and this has type $\mathrm I^*_0$.
The fibre is induced by a plane which intersects $X$ in two double lines,
each containing 3 nodes.
\end{lemma}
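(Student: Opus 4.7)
The plan is to combine a fibre-class computation, a geometric identification of the plane inducing the fibre containing $\tilde L$, and a case analysis of its residual conic. Lemma \ref{lem:non-reduced} already guarantees at least one non-reduced fibre, so the main task is to pin down its plane, determine the type of the fibre, and prove uniqueness.

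First I compute the class of a fibre $F$ and locate $\tilde L$ inside it. For a generic plane $H$ in the pencil, write $H\cap X = L + C_H$ with $C_H$ a plane cubic through $P,Q,R$. The Cartier pull-back formula at the three nodes on $L$ gives
\[
\pi^*(H\cap X) = \tilde L + \tilde C_H + E_P + E_Q + E_R,
\]
so $F \equiv H_S - \tilde L - E_P - E_Q - E_R$ in $\mathrm{Pic}(S)$. Intersection checks then yield $F\cdot \tilde L=0$ and $F\cdot E_P=F\cdot E_Q=F\cdot E_R=1$: the exceptional curves $E_P, E_Q, E_R$ are sections, while $\tilde L$ is a $(-2)$-curve orthogonal to $F$ and so sits inside a unique fibre $F_0$. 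Since the restriction $\phi|_{\tilde L}:\tilde L\to\PP^1$ has degree $F\cdot\tilde L=0$ it is constant, and geometrically $\phi$ sends the smooth points of $L$ to a single $[H_0]\in\PP^1$, namely the common tangent plane to $X$ along $L$; a local computation in coordinates at a smooth point of $L$ then shows $H_0\cap X = 2L + D'$ for some conic $D'$ in $H_0$.

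Next I show that no plane $H\neq H_0$ in the pencil can produce a non-reduced fibre. For such $H$ the line $L$ appears in $H\cap X$ with multiplicity exactly $1$, and the bound $\mathrm{mult}_N(H\cap X)\le \mathrm{mult}_N X\le 2$ at every point $N$ of $X$ rules out any double-line component $2L_1$ in the residual: the intersection point $L\cap L_1$ would force local multiplicity $\ge 1+2=3$, a contradiction. Hence every non-reduced fibre lies over $[H_0]$, and by Lemma \ref{lem:non-reduced} the fibre $F_0$ itself must be non-reduced. This forces $D'=2M$ for some line $M\subset X$: the alternatives (a smooth conic, or a pair of distinct lines $M_1+M_2$) lead via the same pull-back formula to fibres $\tilde L+\tilde{D'}+(\mathrm{exceptional})$ of multiplicity one throughout, which are reduced.

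In the last step I pin down $M$ and the type of $F_0$. The integrality of the Cartier pull-back $\pi^*(2L+2M)$ at the exceptional divisors, the requirement that $E_P, E_Q, E_R$ cannot appear in $F_0$ (since they are sections), and the relation $F_0^2=0$ combine to force $L\cap M$ to be a smooth point of $X$ and $M$ to pass through exactly $3$ nodes $N_1, N_2, N_3$ distinct from $P, Q, R$. Thus
\[
F_0 = \tilde L + 2\tilde M + E_{N_1} + E_{N_2} + E_{N_3},
\]
whose incidence diagram is $\tilde D_4$, with $\tilde M$ central of multiplicity $2$ and four multiplicity-$1$ arms, i.e.\ a fibre of type $\mathrm I^*_0$. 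Uniqueness is then immediate: by the previous paragraph every non-reduced fibre contains $\tilde L$ as a component, and a $(-2)$-curve lies in at most one fibre. The main obstacle will be the case analysis ruling out non-reduced fibres over planes different from $H_0$ together with the integrality and orthogonality bookkeeping that determines the geometry of $M$; each step is a local intersection-number calculation, but they must be tracked carefully against the exceptional divisors $E_P, E_Q, E_R$ and the strict transforms $\tilde L, \tilde M$.
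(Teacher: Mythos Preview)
Your approach follows the same outline as the paper's, but there is one genuine error and some unnecessary detours.

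The error is the inequality $\mathrm{mult}_N(H\cap X)\le \mathrm{mult}_N X$. This goes the wrong way: slicing by a hyperplane can only \emph{raise} multiplicity, so in fact $\mathrm{mult}_N(H\cap X)\ge \mathrm{mult}_N X$, with strict inequality exactly when $H$ contains the tangent cone at $N$. At a node the desired bound $\le 2$ is still true (the tangent cone is an irreducible quadric cone, so every plane section has multiplicity exactly $2$), but at a smooth point $N_0\in L$ you need $H\neq T_{N_0}X$. That in turn requires knowing that $T_pX$ is \emph{constant} along the smooth locus of $L$ and equal to $H_0$ --- something you assert (``the common tangent plane'') but only defer to an unspecified local computation.

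The paper bypasses all of this with a one-line argument: for $H\neq H_0$ the residual cubic $C_H$ satisfies $L\cdot C_H=3$ inside $H$, and the three nodes $P,Q,R$ already lie on $C_H$ (each being a singular point of $H\cap X=L+C_H$). Hence $L\cap C_H=\{P,Q,R\}$ with each intersection simple, so $C_H$ is reduced --- a double line $2L_1$ would force intersection multiplicity $\ge 2$ at $L\cap L_1$. No tangent planes, no multiplicity bounds.

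For the endgame the paper is also more direct: a non-reduced fibre has at least five components, $\tilde L$ is one, and since exceptional curves over nodes appear with multiplicity one, the residual conic must itself be non-reduced, i.e.\ a double line $2M$; the component count then forces three nodes on $M$, giving $\mathrm I_0^*$. Your route via $F_0^2=0$ arrives at the same place but only after $\tilde L\cdot\tilde M=1$ is known, which you never actually verify (it needs $L\cap M$ smooth \emph{and} transversality on $S$).
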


\begin{proof}
The 3 collinear nodes are contained in a line $L\subset X$.
The fibres of \eqref{eq:pi} are thus induced by the cubics residual to $L$ in the planes $H$ containing $L$.
Unless $H$ contains $L$ with higher multiplicity,
the residual cubic meets $L$ in 3 different points and is thus reduced.
Since the exceptional curves above the nodes also appear with multiplicity one in the fibres,
the first half of the claim of the lemma follows.
More precisely, using Lemma \ref{lem:non-reduced}, 
there is exactly one non-reduced fibre,
corresponding to the unique plane $H_0$ containing $L$ with higher multiplicity.
Since the nodes on $L$ induce sections
while a non-reduced fibre has at least 5 components,
there have to be 4 components provided by the conic $Q$ residual to $L$ in $H_0$
and by the nodes it contains.
Then $Q$ must be non-reduced,
hence $Q$ is a double line containing 3 nodes as stated,
and we conclude that we have a fibre of type I$^*_0$.
\end{proof}

We take the lemma as the first step to compute all possible non-reduced fibre types for all settings.

\begin{lemma}
\label{lem:PQ}
A fibre of \eqref{eq:pi} is non-reduced if and only if the underlying hyperplane section
is an irreducible double conic (with 6 nodes)
or splits into two double lines (with 3 nodes each).
If the conic is irreducible,
we get a fibre of type  I$_0^*$.
\end{lemma}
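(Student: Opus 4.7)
The plan is to analyze the possible decompositions of the plane quartic $C_t = X\cap H_t$ that yield a non-reduced fibre $F_t$ on $S$. Since all singularities of $X$ are $A_1$ nodes and the exceptional $(-2)$-curve $E_R$ above a node $R\in H_t\setminus\{P,Q\}$ enters $\pi^*C_t$ with a multiplicity $m_R$ that generically equals $1$ (determined by $\pi^*C_t\cdot E_R = 0$, hence $m_R = \tfrac{1}{2}\tilde C_t\cdot E_R$), the fibre $F_t = \pi^*C_t - E_P - E_Q$ is non-reduced if and only if $C_t$ itself contains a component of multiplicity at least $2$. The plane-quartic decompositions with this property split into five families: (A) $2C$ with $C$ an irreducible smooth conic; (B) $2L_1+2L_2$ with $L_1,L_2$ distinct lines; (C) $4L$; (D) $2L+D$ with $D$ a reduced conic not containing $L$; or (E) $3L+L'$.

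For case (A) I would compute the fibre type directly. The strict transform $\tilde C$ is a smooth rational curve on the K3 surface $S$, so $\tilde C^2 = -2$. The relation $\pi^*(2C)\cdot E_R = 0$ forces $m_R = \tilde C\cdot E_R$, and comparing $(\pi^*(2C))^2 = (2C)^2 = 4$ with the expansion $(\pi^*(2C))^2 = 4\tilde C^2 + 2\sum_R m_R^2$ on $S$ yields $\sum_R m_R^2 = 6$. Hence $C$ passes through exactly six nodes of $X$, each with $m_R = 1$: the two forced ones $P,Q$ and four further nodes $R_1,\dots,R_4$. After subtracting $E_P+E_Q$, the fibre becomes $F_t = 2\tilde C + E_{R_1}+\cdots+E_{R_4}$, with each $E_{R_i}$ meeting $\tilde C$ once transversely and mutually disjoint from the other $E_{R_j}$. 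This dual graph is $\tilde D_4$, i.e.\ $I_0^*$.

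I would rule out cases (C), (D), (E) via Kodaira fibre-type constraints. For (E) (and similarly (C)), the component $\tilde L$ has multiplicity $\geq 3$, so the fibre type must be $IV^*$, $III^*$, or $II^*$, each containing components of multiplicity $2$; yet every other available component ($\tilde L'$ and the exceptional $E_R$'s) has multiplicity $1$. A cleaner contradiction often comes from integrality of $m_R$: for $C_t = 3L+L'$ and a node $R\in L\setminus L'$ with $L$ Cartier at $R$, one computes $m_R = 3/2$, impossible. For case (D), $\tilde L$ is the unique multiplicity-$2$ component, so the only compatible Kodaira type is $I_0^*$, whose four outer multiplicity-$1$ components must be pairwise disjoint and meet $\tilde L$ transversely. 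But Bezout gives $L\cdot D = 2$: either both intersection points are smooth on $X$ (forcing $\tilde L\cdot\tilde D = 2$, not $1$), or at a node $R_0\in L\cap D$ the curve $E_{R_0}$ meets both $\tilde L$ and $\tilde D$. In either case, integrality of $m_{R_0} = \tilde L\cdot E_{R_0} + \tfrac{1}{2}\tilde D\cdot E_{R_0}$ combined with the disjointness condition for outer components of $I_0^*$ yields a contradiction.

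For case (B), the same intersection-theoretic analysis gives a $\tilde D_{n+4}$ configuration with $\tilde L_1, \tilde L_2$ the two multiplicity-$2$ components, and counting the outer multiplicity-$1$ components attached to each $\tilde L_i$ forces exactly $3$ nodes of $X$ on each of the two lines, matching the claim. The hardest step will be case (D), where an $I_0^*$ structure is a priori plausible; excluding it requires a careful enumeration of the Cartier/non-Cartier status of $L$ and the components of $D$ at each relevant node, together with the integrality of the multiplicities $m_R$.
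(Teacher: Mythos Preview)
Your approach is genuinely different from the paper's. The paper does not enumerate the five decomposition types (A)--(E); instead it argues in two quick moves. First, writing the equation locally as $F = q^2 + zg + z^2(\ldots)$ (resp.\ $F = y^2 q + zg + z^2(\ldots)$) directly exhibits the $6$ nodes on an irreducible double conic (resp.\ $3$ nodes on a double line $L$) as the zero locus $\{z=q=g=0\}$ (resp.\ $\{z=y=g=0\}$). Second, once there is a double line $L$ with $3$ collinear nodes, the paper \emph{switches fibrations} to the pencil through $L$ and invokes Lemma~\ref{lem:collinear}, which forces the plane to split as two double lines---eliminating (C), (D), (E) in one stroke. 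The price is that Lemma~\ref{lem:collinear} itself rests on the non-trivial Lemma~\ref{lem:non-reduced} (Weierstrass analysis on the Jacobian). Your route avoids that machinery entirely and is more self-contained, but trades this for a longer case analysis.

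Two points need attention. First, your fibre formula $F_t=\pi^*C_t-E_P-E_Q$ presupposes $\overline{PQ}\not\subset X$; when $P,Q$ are collinear with a third node, the strict transform of $\overline{PQ}$ lies in the fixed part of the pencil and must also be subtracted. The paper handles this case at the outset by citing Lemma~\ref{lem:collinear}; you should do likewise, or redo the analysis with the corrected fibre class. Second, in case~(D) your claim that ``the only compatible Kodaira type is $I_0^*$'' can fail: if $D=M_1+M_2$ with $L,M_1,M_2$ concurrent at a singular point $R_0$, then $m_{R_0}=2$ and a second multiplicity-$2$ component appears. What actually closes these residual subcases is the all-nodes hypothesis: whenever two or more branches of $H\cap X$ pass through a singular point $R_0$ of $X$, the quadratic part of $F$ at $R_0$ has the form $z\cdot\ell + cz^2$ (plus the $\geq 2$-jet of the plane curve), which is never of type $A_1$ in characteristic~$2$. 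Stating this explicitly prunes the awkward subcases and lets your enumeration (integrality of $m_R$ plus incompatibility with the $\tilde D_4$ dual graph) go through cleanly.
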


\begin{proof}
If the fibration \eqref{eq:pi} is induced by 3 collinear nodes,
then we have seen this already in Lemma \ref{lem:collinear}
(in fact, only the second alternative).

In general, since exceptional curves appear with multiplicity one in the fibres,
a fibre corresponding to some plane $H$ through nodes $P, P'$ can only be non-reduced
if $H\cap X$ is non-reduced.
If it is an irreducible double conic given by $\{q=z=0\}$ where $H=\{z=0\}$, then the equation of $X$,
\begin{eqnarray}
\label{eq:double-conic}
F = q^2 + zg + z^2\hdots,
\end{eqnarray}
directly reveals the 6 nodes given by $\{z=q=g=0\}$ (under the assumption that $X$ is normal and all singular points are nodes).
This verifies the first alternative of the lemma.

Otherwise $H$ splits into a double line $L=\{y=z=0\}\subset X$ and a residual conic given by $\{q=z=0\}$.
As before, the equation
\begin{eqnarray}
\label{eq:double-line}
F = y^2 q + zg + z^2\hdots,
\end{eqnarray}
reveals three nodes on $L$ given by $\{z=y=g=0\}$.
Now consider the elliptic fibration induced by $L$
and conclude by applying Lemma \ref{lem:collinear}.

\end{proof}

Note that the irreducible double conic in Lemma \ref{lem:PQ} only arises if no two of the 6 nodes are collinear with another node
(by B\'ezout's theorem), and that it gives a fibre of type I$^*_0$,
while the other configuration may arise for both set-ups 
(giving type I$^*_0$ and three sections if the two specified nodes are collinear with a third node 
as in Lemma \ref{lem:collinear}, 
resp.\ type  I$^*_1$ and two bisections otherwise).
We emphasize that, as shown in the proof, 
these configurations do not involve a line $\overline{PQ}\subset X$ for two nodes $P, Q\in X$
unless the line contains a third node.

\begin{rem}
The two previous lemmata and their proofs
(esp.\ equations \eqref{eq:double-conic}, \eqref{eq:double-line}) also show
that any non-reduced plane is automatically everywhere non-reduced,
falling into the two alternatives from Lemma \ref{lem:PQ}.
We will thus only refer to non-reduced planes in what follows.
\end{rem}

\subsection{Non-reduced planes}

In this section, we piece together the information about non-reduced fibres
to prove the bound of Theorem \ref{thm} in the case of nodes.
We first limit the possible intersections of non-reduced planes.

\begin{lemma}
\label{lem:non-reduced-planes}
The intersection of two non-reduced planes cannot contain three nodes.
\end{lemma}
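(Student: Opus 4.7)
The plan is to argue by contradiction: suppose that $H_1$ and $H_2$ are two distinct non-reduced planes whose intersection line $L=H_1\cap H_2$ contains three nodes of $X$. First I would observe that any line meeting $X$ at three nodes has intersection multiplicity at least $6$ with $X$, so by B\'ezout the line $L$ must be contained in $X$.

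Next I would use Lemma \ref{lem:PQ} to determine the structure of each $H_i\cap X$. Since $L\subset H_i\cap X$ is a linear component, the irreducible-double-conic alternative is excluded, and each $H_i\cap X$ must split into two double lines, one of which is necessarily $L$. I would then choose coordinates so that $H_1=\{w=0\}$, $H_2=\{z=0\}$, and hence $L=\{z=w=0\}$, which yields
\[
F(x,y,z,0)=z^2\,\ell_1(x,y,z)^2, \qquad F(x,y,0,w)=w^2\,\ell_2(x,y,w)^2,
\]
for suitable linear forms $\ell_1,\ell_2$ cutting out the second double line in each $H_i$.

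A monomial count on these two restrictions shows that no monomial $x^iy^jz^kw^l$ of $F$ can satisfy $k+l<2$: those with $l=0$ must have $k\ge 2$ by the first equation, and symmetrically for $k=0$. Hence $F\in(z,w)^2$, so $F=z^2A+zwC+w^2B$ for homogeneous quadratic forms $A,B,C$. Since every monomial of $F$ has $k+l\ge 2$, every monomial of each partial derivative of $F$ still has $k+l\ge 1$, and consequently all four partials of $F$ vanish identically along $L$. Therefore $L\subset\Sing(X)$, contradicting the normality of $X$ (whose singular locus is zero-dimensional).

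The main subtlety lies in passing from the two individual conditions $F\equiv z^2\ell_1^2\pmod w$ and $F\equiv w^2\ell_2^2\pmod z$ to the stronger ideal-theoretic containment $F\in(z,w)^2$: without the symmetric combination, one would still permit monomials of the form $w\cdot(\text{cubic in }x,y)$, which can produce a nonzero $\partial F/\partial w$ on $L$. It is precisely the condition from the second non-reduced plane that eliminates these terms and forces all partials to vanish. I note that the argument is in fact characteristic-free, even though only characteristic $2$ is needed for the surrounding theorem.
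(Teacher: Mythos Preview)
Your proof is correct, and it takes a genuinely different route from the paper's.

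The paper argues as follows: once $L=H_1\cap H_2$ contains three nodes, $L\subset X$, and then Lemma~\ref{lem:collinear} says the fibration induced by $L$ has \emph{exactly one} non-reduced fibre; but $H_1$ and $H_2$ are two distinct planes through $L$, each giving a non-reduced fibre, a contradiction. This is a one-line reduction to the fibration machinery already developed (Lemma~\ref{lem:non-reduced} feeding into Lemma~\ref{lem:collinear}).

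Your argument instead bypasses the fibration statement entirely. After identifying $L$ as one of the two double lines in each $H_i\cap X$, you write down the two restrictions of $F$ and deduce $F\in(z,w)^2$ by the monomial count, whence all partials vanish along $L$ and $X$ is singular along $L$, contradicting normality. This is a purely local, coordinate-based argument; it uses only the classification of non-reduced plane sections (Lemma~\ref{lem:PQ} and the Remark following it) and nothing about Weierstrass forms or wild ramification. As you note, it is characteristic-free. The trade-off is that the paper's proof is shorter given what has already been built, whereas yours is more self-contained and would survive even if Lemma~\ref{lem:collinear} were weakened to merely asserting the existence (rather than uniqueness) of a non-reduced fibre.
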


\begin{proof}
If the intersection of the non-reduced planes $H_1, H_2$,
the line $L$, say,
were to contain 3 nodes, then $L\subset X$ and  the fibration induced by $L$ would have exactly one non-reduced fibre
 by Lemma \ref{lem:collinear}. This contradicts the fact that both $H_1$ and $H_2$ give non-reduced fibres.
\end{proof}

By Lemma \ref{lem:PQ},
each pair of nodes is contained (together with 4 other nodes) 
in a  non-reduced plane as above.
This implies that any node $P$ is contained in at least 3 distinct   non-reduced planes,
say $H_1, H_2, H_3$.

\begin{claim}
\label{claim}
$H_1$ and $H_2$ meet in exactly two nodes.
\end{claim}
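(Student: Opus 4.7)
The plan is to apply a short inclusion--exclusion argument to the three non-reduced planes $H_1$, $H_2$, $H_3$ through $P$, feeding in two inputs already established: Lemma \ref{lem:PQ} says each $H_i$ contains exactly six nodes of $X$ (this is the ``together with $4$ other nodes'' content of the paragraph preceding the claim), and Lemma \ref{lem:non-reduced-planes} says the pairwise intersection line $H_i\cap H_j$ contains at most two nodes. Since $X$ has $13$ nodes in total, these constraints should leave very little room.

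Concretely, I would introduce the quantities
\[
b_{ij}=|H_i\cap H_j\cap\Sing(X)|,\qquad c=|H_1\cap H_2\cap H_3\cap\Sing(X)|,
\]
noting $c\geq 1$ (all three planes pass through $P$), $c\leq b_{ij}\leq 2$ (from Lemma \ref{lem:non-reduced-planes}), and $|H_i\cap\Sing(X)|=6$ for each $i$. Inclusion--exclusion then gives
\[
13 \;\geq\; |(H_1\cup H_2\cup H_3)\cap\Sing(X)| \;=\; 18-(b_{12}+b_{13}+b_{23})+c,
\]
i.e.\ $b_{12}+b_{13}+b_{23}\geq 5+c$. A brief case split finishes: if $c\geq 2$, each $b_{ij}$ equals $2$ and their sum is $6$, contradicting $5+c\geq 7$; hence $c=1$, so $b_{12}+b_{13}+b_{23}\geq 6$, and together with $b_{ij}\leq 2$ this forces $b_{ij}=2$ for every pair, in particular $b_{12}=2$ as required.

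There is no real obstacle here; the only point I would be careful about is that the count ``six nodes per non-reduced plane'' really holds. A priori the two-double-lines alternative in Lemma \ref{lem:PQ} could admit a degenerate case where the two lines share a node, dropping the count to five, and then the bound $b_{12}+b_{13}+b_{23}\geq 5+c$ would relax to $b_{12}+b_{13}+b_{23}\geq 2+c$, which is no longer sufficient to conclude. This possibility is however excluded by the sentence preceding the claim, which asserts that each pair of nodes lies in a non-reduced plane together with \emph{four} further nodes.
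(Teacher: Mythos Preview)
Your proof is correct and takes essentially the same counting approach as the paper: both use that each $H_i$ contains six nodes, that there are only $13$ nodes available, and that Lemma~\ref{lem:non-reduced-planes} caps each pairwise intersection at two nodes. The paper's version is terser --- it works with the $12$ nodes other than $P$ and the $15=3\cdot 5$ ``slots'' to get three forced duplicates, one per pair --- while you spell out the full inclusion--exclusion and handle the triple-intersection case explicitly; your caution about the six-nodes-per-plane count is appropriate and correctly resolved by the preceding sentence in the paper.
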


\begin{proof}
Each plane $H_i$ contains $P$ and 5 other nodes.
Since there are 12 nodes other than $P$ in total,
there have to be at least 3 duplicate points.
By Lemma \ref{lem:non-reduced-planes},
this amounts to exactly one duplicate (other than $P$) for each pair $(H_i, H_j)$.
\end{proof}

\begin{claim}
\label{claim2}
Each node is contained in exactly 3 non-reduced planes.
\end{claim}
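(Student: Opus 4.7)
The plan is to show that the number $k$ of non-reduced planes containing a fixed node $P$ satisfies $k\leq 3$; combined with the inequality $k\geq 3$ noted just before Claim \ref{claim}, this forces $k=3$. The argument is a double counting of the pairs $(H,Q)$, where $H$ is a non-reduced plane through $P$ and $Q\neq P$ is a node of $X$ lying on $H$.

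First I would record what Lemma \ref{lem:PQ} gives: every non-reduced plane contains exactly six nodes of $X$ (either the six nodes of an irreducible double conic, or three on each of two double lines), and every pair of nodes lies in at least one non-reduced plane. Hence if $H_1,\dots,H_k$ are the non-reduced planes through $P$ and $m_Q$ denotes the number of the $H_i$ containing a given node $Q\neq P$, then
\[
\sum_{Q\neq P} m_Q \;=\; 5k, \qquad m_Q\geq 1 \text{ for each of the } 12 \text{ nodes } Q\neq P.
\]

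Next I would extract the pairwise constraint from Lemma \ref{lem:non-reduced-planes}: for any two distinct planes $H_i, H_j$ the line $H_i\cap H_j$ contains $P$ and at most two nodes in total, hence at most one further node besides $P$. Summing the contribution over the $\binom{k}{2}$ unordered pairs gives
\[
\sum_{Q\neq P}\binom{m_Q}{2} \;\leq\; \binom{k}{2}.
\]
Combined with $\sum m_Q = 5k$, this yields $\sum m_Q^2 \leq k^2+4k$, while Cauchy-Schwarz over the $12$ terms gives $\sum m_Q^2 \geq (5k)^2/12 = 25k^2/12$. The inequality $25k^2/12 \leq k^2+4k$ simplifies to $13k\leq 48$, so $k\leq 3$.

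The computation is elementary; there is no real obstacle beyond the translation of Lemma \ref{lem:non-reduced-planes} into the pairwise incidence inequality. The subtle point is that $H_i\cap H_j$ is a priori only a line in $\PP^3$, but it necessarily contains $P$ and, by Lemma \ref{lem:non-reduced-planes}, at most one additional node -- which is exactly what is needed for the $\binom{k}{2}$ bound. No further case analysis seems to be required: the counting is tight enough that $k = 4$ is excluded directly by the two constraints above.
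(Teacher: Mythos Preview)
Your proof is correct and rests on the same two ingredients as the paper's argument: each non-reduced plane through $P$ contains five further nodes (Lemma \ref{lem:PQ}), and any two such planes share at most one node besides $P$ (Lemma \ref{lem:non-reduced-planes}). The paper's count is more direct, however: assuming a fourth plane exists, a Bonferroni-type bound gives at least $1 + 4\cdot 5 - \binom{4}{2} = 15$ nodes in total, contradicting the fact that there are only $13$; your Cauchy--Schwarz step, while valid, is more machinery than the situation requires.
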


\begin{proof}
If some node $P$ were contained in 4 non-reduced planes,
then each plane would contain $P$ together with 5 other nodes,
so in total we count
\[
1 + 4\cdot 5 - \binom 42 = 15 \; \text{nodes}
\]
by Claim \ref{claim}, which is absurd.
\end{proof}

\subsection{Census for 13 nodes}
\label{ss:13nodes}

Let $m$ be the number of non-reduced planes.
Each contains 6 nodes by Lemma \ref{lem:PQ}.
On the other hand, each node is contained in exactly 3 non-reduced planes by Claim \ref{claim2}.
Hence
\[
6m = 13\cdot 3.
\]
This shows that a non-supersingular normal quartic cannot contain 13 nodes
(proving a substantial part of Theorem \ref{thm}).

\qed

\subsection{Higher singularities}
\label{ss:higher}

We shall now assume that one of the 13 singular points is not a node
(but of ADE-type by \cite[Prop.\ 15]{CS}).
We will derive a contradiction in 3 steps:
\begin{enumerate}
\item[1.]
limit the possible configurations of singularities (Lemma \ref{lem:higher});
\item[2.]
prove that certain fibrations admit suitable sections  (Lemma \ref{lem:section});
\item[3.]
play this off against non-reduced fibres and the restrictions 
on disjoint smooth rational fibre component (Section \ref{ss:vs}).
\end{enumerate}

\begin{lemma}
\label{lem:higher}  
The only options for non-nodes are 
\begin{enumerate}
\item[(1)]
one $A_3$-singularity and 12 nodes, or
\item[(2)]
at most three $A_2$-singularity and all other singularities nodes.
\end{enumerate}
\end{lemma}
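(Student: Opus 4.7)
The plan is to use the elliptic fibrations on $S$ induced by pairs of singular points of $X$, combined with Propositions \ref{lem:12}--\ref{lem:=12}, and to finish with a case analysis based on the restricted list of fibre types in Proposition \ref{cor:12fibres}(ii).

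First I would fix a pair $(P, Q)$ of singular points of $X$ both of type $A_1$ or $A_2$, chosen so that the line $\overline{PQ}$ is not contained in $X$. The resulting pencil of planes yields an elliptic fibration $S\to\PP^1$, since the generic plane section through $P, Q$ is a plane quartic with an $A_1$-node at each of $P$ and $Q$ (inherited from the tangent cone $xy=0$ of the surface singularity), dropping the geometric genus from $3$ to $1$. The key advantage of picking low-type base points is that all the exceptional curves over $P$ and $Q$ become sections or bisections, and none of them is vertical. Hence the remaining $11$ singularities contribute exactly $11$ pairwise disjoint ADE-configurations $\Gamma_1,\ldots,\Gamma_{11}$ supported in the fibres.

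The core estimate is then
\[
\sum_{i=1}^{11} N(\Gamma_i) \;\leq\; 12,
\]
where $N(\Gamma)$ denotes the maximum number of pairwise disjoint $(-2)$-curves inside $\Gamma$; this comes from Proposition \ref{lem:12}(i). One computes $N(A_1)=N(A_2)=1$, $N(A_3)=N(A_4)=2$, and $N(T)\geq 3$ for every other ADE type. Hence the inequality rules out any singularity of type $A_k$ with $k\geq 5$, any $D_k$, any $E_k$, and more than one singularity of type in $\{A_3, A_4\}$. The only provisional possibilities left are: (a) exactly one singularity of type $A_3$ or $A_4$ together with nodes and possibly some $A_2$'s, or (b) no higher singularities, only nodes and $A_2$'s.

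In case (a) the inequality is an equality, so by Proposition \ref{lem:=12} the fibration has exactly two non-reduced fibres, of types chosen (with minimum wild ramification) from the list in Proposition \ref{cor:12fibres}(ii). I would then run a case-by-case analysis of how the eleven configurations $\Gamma_i$ embed as sub-dual-graphs inside the allowed fibre types, using the disjointness principle that exceptional divisors of distinct singularities of $X$ are disjoint, and hence non-adjacent in any fibre dual graph. To rule out $A_4$, the required chain of four adjacent fibre components forces the host fibre into $I_{2n}$ with $n\geq 3$, $I^*_n$ with $n\geq 1$, $IV^*$, or $III^*$; in each, the fibre components adjacent to the chain inside the dual graph cannot be exceptional curves of other singularities of $X$, and together with the Euler-number bookkeeping $\sum_v(e(F_v)+\delta_v)=24$ and the fibre multiplicities this yields a contradiction. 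The bound of at most three $A_2$'s is obtained by a per-fibre-type count of the maximum number of pairwise disjoint $A_2$-embeddings (for instance at most one in $I^*_0$ via leaf-centre, at most two in $I^*_1$, and so on) and summing. The coexistence of an $A_3$ with an $A_2$ is ruled out by playing the two choices of base pair (two nodes versus one node plus the $A_2$) against each other and against the forced two-non-reduced-fibre structure.

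The main obstacle is the final case analysis inside the fibres: one has to carefully track disjointness, fibre multiplicities, and the Euler-number balance across all allowed non-reduced fibre types, ruling out each forbidden configuration in turn.
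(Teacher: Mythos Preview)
Your overall strategy---use the induced elliptic fibration and the fibre-component bounds from Proposition \ref{lem:12}---is the paper's strategy, but you take a substantially longer route and miss the key shortcut.

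For case (1) the paper's argument is immediate from Proposition \ref{lem:12}(iii), which you cite but never actually invoke. If some singular point $P$ has type containing $A_3$, choose two disjoint $A_1$'s inside its Dynkin diagram; together with the \emph{full} ADE-configurations above the remaining ten (or nine) singular points in the fibres, this already gives $12$ disjoint ADE-configurations supported on the fibres. By (iii) each of them must be $A_1$. This forces all the other singularities to be nodes, and forces $P$ to be exactly $A_3$: for $A_n\ (n\geq 4)$, $D_n$, or $E_n$ one can rearrange the two $A_1$'s coming from $P$ into a disjoint pair $A_2+A_1$, again contradicting (iii). This single observation replaces your entire fibre-by-fibre case analysis for ruling out $A_4$ and the coexistence of an $A_3$ with an $A_2$. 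Your proposed mechanism for the latter (``playing the two choices of base pair against each other'') is left vague, and it is not clear what contradiction you would actually extract from the two-non-reduced-fibre structure.

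For the bound of at most three $A_2$'s, your ``per-fibre-type count of pairwise disjoint $A_2$-embeddings'' is in the right spirit but is not the right quantity. What is needed is the refined estimate the paper isolates as Fact \ref{fact}: if a fibre $F_v$ supports $r$ disjoint ADE-configurations of which $i$ contain an $A_2$, then $r\leq \tfrac12(e(F_v)+\delta_v - i)$ (with a single exception at type $\mathrm{IV}^*$). Summing over all fibres gives $11\leq (24-4)/2=10$ once there are four $A_2$'s in the fibres, the desired contradiction. Merely bounding the number of $A_2$'s embeddable in each fibre type, as you propose, does not by itself control the total number of configurations, because the $A_1$'s and $A_2$'s compete for the same Euler-number budget.

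A minor point: your assumption that one can always choose base points $P,Q$ of type $A_1$ or $A_2$ with $\overline{PQ}\not\subset X$ is not justified; the paper simply allows two \emph{or three} base points (the latter when the connecting line lies on $X$), and you should do the same.
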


\begin{rem}
In Proposition \ref{prop:A_3} we will prove by a different argument
that with 12 or more singular points, a non-supersingular quartic can only admit $A_1$ and $A_2$ singularities.
More precisely, by Proposition \ref{prop:A_2} there can be at most 3 $A_2$'s.
\end{rem}

\begin{proof}
Assume that there is a singular point $P$ which is not of type $A_1$ or $A_2$.
Then $A_3$ embeds into the corresponding Dynkin diagram.
If $P$ lies in a fibre of some elliptic fibration induced by two or three other singularities, 
then the exceptional curves furnish it
with 2  disjoint $(-2)$-curves
(corresponding to the embedding $A_1^2\hookrightarrow A_3$) -- 
also disjoint to the 10 exceptional curves above  the other singular points on the fibres
(and the connecting line if there are three collinear base points).
By Proposition \ref{lem:12} (iii), the corresponding 12 orthogonal root lattices of type $A_1$
supported on the fibres
cannot be extended while staying disjoint.
Hence $P$ has type $A_3$ and the other nine or ten singular points on the fibres  are nodes.
By symmetry, this implies that all singular points other than $P$ are nodes as stated in (1).

It remains to bound the number of $A_2$-singularities.
For this purpose, we introduce the following notation for a fibre $F_v$:
\[
N_v^{(i)} = \max \left\{\begin{matrix} r; \; \exists \, \text{disjoint ADE-configurations $\mathcal C_1,\hdots,\mathcal C_r$ supported on $F_v$}\\ 
\text{such that $i$ of the $\mathcal C_j$'s contain $A_2$'s
}\end{matrix}\right\}.
\]
One might expect that all other ADE-configurations in the above set-up will have type $A_1$,
but then one notices  that except for type III,
on all fibre types not attaining equality in \eqref{eq:N_v} for the minimal $\delta_v$,
one can replace one $A_1$ by $A_2$ while preserving orthogonality.
In fact, this is the first step towards proving the following:

\begin{fact}
\label{fact}
$N_v^{(i)} \leq  \frac 12 (e(F_v) + \delta_{v} - i)$ unless $i=3$ and $F_v$ has type $\mathrm{IV}^*$ where $N_v^{(3)}=3$.
\end{fact}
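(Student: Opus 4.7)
The plan is to prove Fact \ref{fact} by a case-by-case combinatorial analysis of the extended Dynkin diagrams listed in Table \ref{tab}.

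First I would reduce to the case where each configuration $\mathcal{C}_j$ is either $A_1$ or $A_2$. Any ADE-type other than $A_1$ contains $A_2$ on some pair of adjacent $(-2)$-curves; replacing a larger $\mathcal{C}_j$ containing $A_2$ by such an $A_2$ shrinks its support without affecting disjointness or the count $r$, and preserves the $A_2$-containment. Similarly, any configuration not containing $A_2$ must be a single $A_1$. The problem thus becomes: on the extended Dynkin diagram of $F_v$, pack the largest possible number of pairwise disjoint subsets consisting of $i$ edges (the $A_2$'s) and $r-i$ isolated vertices (the $A_1$'s), where distinct subsets must not be joined by any edge of the diagram.

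Next I would carry out the counting for each fibre type. For the $n$-cycle $\mathrm I_n$, every $A_1$ occupies one vertex plus a ``buffer'' vertex after it and every $A_2$ occupies two vertices plus a buffer; summing around the cycle gives $2r + i \le n = e_v+\delta_v$, which is the desired bound. For the reduced fibres $\mathrm{II}, \mathrm{III}, \mathrm{IV}$ the bound is immediate from $m_v\le 3$ together with the positive minimal $\delta_v$ of types $\mathrm{II}$ and $\mathrm{III}$. For the $\tilde D_{n+4}$-diagrams of type $\mathrm I_n^*$, the four fork leaves contribute up to four ``free'' $A_1$'s at no buffer cost to the central path, while the $A_2$'s can be placed either along the path or as a leaf together with the adjacent fork vertex; a short enumeration yields the bound, using $e_v+\delta_v \ge n+8$.

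The remaining cases are the three exceptional diagrams $\tilde E_6, \tilde E_7, \tilde E_8$ (types $\mathrm{IV}^*, \mathrm{III}^*, \mathrm{II}^*$), where one enumerates placements of $A_2$'s along the arms. The stated exception arises precisely in $\tilde E_6$: since its three arms all have length $2$, one may place one $A_2$ on each arm, giving three pairwise disjoint $A_2$'s and $N_v^{(3)}=3$, exceeding the generic bound $(8-3)/2 = 5/2$. In $\tilde E_7$ (arms of lengths $3,3,1$) and $\tilde E_8$ (arms of lengths $1,2,5$), the asymmetry of the arm lengths forces any packing with three or more $A_2$'s to use one long arm at its tip, which costs a buffer vertex deeper in the arm; a direct check shows that the resulting counts meet but do not exceed $(e_v+\delta_v-i)/2$. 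The main obstacle is precisely this case analysis for the $E$-type diagrams: one has to verify that no ``symmetric'' packing of $A_2$'s analogous to the one in $\tilde E_6$ survives in $\tilde E_7$ or $\tilde E_8$, which is ultimately ensured by the unequal arm lengths. One should also note that the bound uses the minimum $\delta_v$ from Table \ref{tab}; for larger actual $\delta_v$ the bound only becomes weaker, hence is automatically satisfied.
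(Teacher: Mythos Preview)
Your proposal is correct and follows essentially the same approach as the paper: the paper's proof consists of the single sentence ``This can be verified directly by going through all Kodaira types,'' together with the remark that the bound is sharp for small $i$ but not for larger $i$. Your write-up simply makes this case-by-case verification explicit, including the useful preliminary reduction to configurations of type $A_1$ and $A_2$ and the clean cycle-counting argument $2r+i\le n$ for $\mathrm I_n$; nothing beyond what the paper intends is required.
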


\begin{proof}
This can be verified directly by going through all Kodaira types.
Note that the bound is sharp for small $i$,
but ceases to stay sharp for larger $i$.
\end{proof}

Seeking for a contradiction,
we assume that $X$ contains 13 singular points, among them four $A_2$ singularities.
Pick an elliptic fibration given by two or three nodes
such that 4 $A_2$'s contribute to the fibres - together with 7 disjoint $A_1$'s (or higher singularities).
If there is a fibre of type IV$^*$ supporting three $A_2$'s, say at $v=\infty$,
then there is one other fibre containing an $A_2$.
For the number of disjoint ADE-configurations including the 4 $A_2$'s supported on the fibres, we thus obtain,
by Fact \ref{fact},
\[
11 \leq 3 +  \sum_{v\neq \infty} N_v^{(i_v)} \leq 3 +  \frac 12 \sum_{v\neq\infty} (e(F_v) + \delta_{v}) -\frac 12 \leq 3 + \frac{16-1}2,
\]
yielding the desired contradiction. If there are no 3 $A_2$'s supported on a fibre of type IV$^*$,
then we can even rule out 3 $A_2$ singularities on the fibres since, by Fact \ref{fact},
\[
11 \leq  \sum_v N_v^{(i_v)} \leq \frac 12 \sum_v (e(F_v) + \delta_{v} - i_v) \leq \frac{24-3}2
\]
gives again a contradiction.

\end{proof}

\subsection{Fibrations with sections}

\begin{lemma}
\label{lem:section}
For every fibration induced by a singular point $P$ which is not a node, and by one or two nodes,
there is a section orthogonal to 11 disjoint $(-2)$-curves supported on the fibres.
\end{lemma}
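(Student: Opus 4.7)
The plan is to exhibit the required section as an end component of the exceptional chain of $\pi:S\to X$ over $P$, and to select $11$ disjoint fibre components from exceptional curves over the remaining singular points (with a small adjustment from the non-reduced fibre in the collinear case).

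By Lemma~\ref{lem:higher}, $P$ has type $A_2$ or $A_3$, so $\pi^{-1}(P)\subset S$ is a chain $E_1^P+\cdots+E_n^P$ of smooth rational $(-2)$-curves with $n=2$ or $3$. A direct local blow-up computation at $P$ using the local equation $uv=w^{n+1}$ and a generic pencil member $H=\{au+bv+cw=0\}$ gives the pullback formula
\[
\pi^*H = \tilde H + \sum_{i=1}^n m_i\,E_i^P,
\]
with $(m_1,m_2)=(1,1)$ in the $A_2$ case and $(m_1,m_2,m_3)=(1,2,1)$ in the $A_3$ case, where moreover $\tilde H$ meets each end component of the chain transversally at one point and is disjoint from the middle component when $n=3$. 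Since the full exceptional chain over $P$ lies in the fixed part of the pencil, the general fibre $F=\tilde H$ satisfies $F\cdot E_1^P=1$, and hence $E_1^P$ is a section of the induced elliptic fibration.

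For the $11$ disjoint fibre components I would argue as follows. When the fibration has only two base points ($P$ and a single node $Q$), the remaining $11$ singular points of $X$ each provide at least one exceptional $(-2)$-curve on $S$; each such curve lies in the special fibre corresponding to a hyperplane through the given singular point, the $11$ curves are pairwise disjoint because distinct singular points of $X$ have disjoint exceptional loci on $S$, and each is automatically disjoint from $E_1^P$ for the same reason. In the collinear three-base case ($P$ together with two nodes on a line $L\subset X$), only $10$ such exceptional curves are available; the required $11$th curve is then supplied by a component of the non-reduced fibre guaranteed by Lemma~\ref{lem:non-reduced}, chosen to be disjoint from $\pi^{-1}(P)$ and from the $10$ already selected curves---for instance, the strict transform of the residual line in the decomposition of the special plane.

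The hard part will be precisely the collinear three-base case, where the count of $11$ is tight: one must verify that among the components of the non-reduced fibre (whose type is constrained by Table~\ref{tab}) there is always at least one $(-2)$-curve lying outside both $\pi^{-1}(P)$ and the $10$ chosen exceptionals over the other singular points. The rest of the argument---identifying the section as an end component of $\pi^{-1}(P)$, and checking orthogonality---then reduces cleanly to the disjointness of exceptional loci over distinct singular points of $X$.
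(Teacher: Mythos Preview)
Your overall strategy---extract a section from the exceptional locus over $P$ and count the remaining exceptional curves as disjoint fibre components---matches the paper's, but your local computation is too coarse and fails in a case that actually occurs. First, a minor point: the fundamental cycle of $A_n$ is reduced, so for a generic hyperplane through an $A_3$ point the multiplicities are $(1,1,1)$, not $(1,2,1)$; indeed $(1,2,1)$ is inconsistent with your own claim that $\tilde H$ meets each end of the chain once and misses the middle. More seriously, your pencil is \emph{not} generic at $P$: every member contains the direction of $\overline{PQ}$, and when this line lies on $X$ its strict transform $\tilde L$ enters the fixed part. If $P$ is $A_3$ and $\tilde L$ happens to meet the middle component $E_2$ (geometrically: $L$ is the intersection of the two planes of the tangent cone at $P$), the configuration of $(-2)$-curves over $P,Q$ together with $\tilde L$ is of type $D_5$, the correct fixed part has $E_2$ with multiplicity $2$, and one computes $F\cdot E_1=F\cdot E_3=0$ while $F\cdot E_2=1$. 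So the end components are fibre components and the \emph{middle} component is the section---exactly the opposite of your claim. The paper handles this by listing all possible configurations $A_1+A_2,\,A_1+A_3,\,A_4,\,A_5,\,D_5$ and treating $D_5$ separately.

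Your collinear case is also incomplete. The paper does not invoke Lemma~\ref{lem:non-reduced} here at all; instead it observes that the configuration over $P,P',P''$ and $\tilde L$ has type $D_5$, $D_6$ or $\tilde D_5$, rules out $\tilde D_5$ via Proposition~\ref{lem:12}(i), and in the remaining cases takes an \emph{exterior} component as the section and an \emph{interior} component (e.g.\ $\tilde L$ itself, or a suitable $E_i^P$) as the eleventh disjoint fibre curve. Your proposed eleventh curve---a component of some non-reduced fibre---would require checking that this component avoids both the section and all ten chosen exceptional curves; since the non-reduced plane typically contains several of the other singular points, this is not automatic, and your suggestion of a ``residual line'' presupposes a structure of the non-reduced fibre (Lemma~\ref{lem:PQ}) that was only established in the all-nodes setting.
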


\begin{proof}
We consider first the case where the fibration is induced by $P$ and a node $P'$ 
which are not collinear with a third singular point.

Consider the configuration $\sC$ of $(-2)$-curves given by the exceptional curves lying above $P$ and $P'$,
 plus the strict transform of the connecting line
in case it is contained in $X$.
The configuration of these $(-2)$-curves
is a priori among the types $A_1+A_2, A_1+A_3, A_4, A_5, D_5$.

Except  for the $D_5$ case, 
the fibration $|F|$ is obtained 
 from the pull-back of the hyperplane divisor $H$
by subtracting the
 fundamental cycle(s) of the configuration $\sC$ (which is reduced).

Then one of the two outer components of the exceptional divisor above $P$ is an outer component of the fundamental
cycle: hence it induces the claimed section
(since it intersects $F$ with multiplicity 1).

In the $D_5$ case, in order 
to obtain the fibration  one has  to subtract all the curves in the configuration $\sC$
with multiplicity one,
except that the exceptional curve  $E$ which has three adjacent curves in the configuration has to be subtracted
with multiplicity 2. In fact, the line $L : = \overline{PP'}$ must be contained inside $X$, and since its strict transform intersects $E$, 
this line $L$  is the intersection of the two planes which form the tangent cone at $P$, which is a singularity of type $A_3$,
$ xy = z^4$:
then each plane containing $L$ contains the sum of the exceptional divisor over $P$ with $E$, as a local calculation
shows.

 In particular,  $E$  provides  a section.

In all  cases, the section is disjoint from the remaining 11 singular points, so the claim follows.

If $P$ is collinear with nodes $P', P''$,
then the exceptional curves and the strict transform of the line (which meets three other components) may form the configurations $D_5, D_6$ or $\tilde D_5$.
The last case is excluded by Proposition \ref{lem:12} (i), 
since then we have a Kodaira fibre of type I$_1^*$ (of some elliptic fibration)
which contains 4 disjoint $(-2)$-curves and is disjoint from the 
remaining 10 singularities.
In the first two cases, the fibration $|F|$ is obtained again
by subtracting the reduced divisor supported on the configuration $\sC$.
All exterior components of the configuration provide sections while the interior components give fibre components
(since they have zero intersection with $F$).
In particular, each of these sections is disjoint from 11 $(-2)$-curves supported on the fibres,
given by one interior component of the configuration and the exceptional curves above the 10 remaining singular points.

\end{proof}

\subsection{Sections vs.\ non-reduced fibres}
\label{ss:vs}

To complete the argument for higher singularities, 
consider a fibration induced by $P$ and some nodes as in Lemma \ref{lem:section}.
By Lemma \ref{lem:non-reduced} there is a non-reduced fibre $F_v$, say with extended Dynkin diagram $\tilde V$.
By Lemma \ref{lem:section}, there is a section $O$ and the disjoint ADE-configuration supported on $\tilde V$ 
is already supported on the Dynkin diagram of type $V$
obtained by omitting the simple fibre component met by $O$.
Denote by $N_v'$ the maximal number of disjoint $(-2)$-curves supported on $V$.
A case-by-case inspection teaches us that this is one less than $N_v$:

\begin{fact}
\label{fact2}
For a non-reduced fibre, one has
$$
N_v' \leq \frac 12 (e(F_v) + \delta_{v})-1.
$$
\end{fact}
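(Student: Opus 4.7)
My approach is to reduce the claim to the previously established bound $N_v \leq \tfrac12(e(F_v)+\delta_v)$ from \eqref{eq:N_v} by establishing the sharper combinatorial fact $N_v' \leq N_v - 1$ for every non-reduced Kodaira fibre type; the desired inequality then follows at once, since subtracting $1$ from both sides of \eqref{eq:N_v} immediately yields the result.

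First I would observe that a section $O$ meets every fibre in a single simple (multiplicity-one) component, and in each non-reduced Kodaira type $I_n^*, IV^*, III^*, II^*$ the simple components are precisely the leaves of the affine Dynkin diagram $\tilde V$ whose mark is one. The problem thus becomes the purely combinatorial claim that in each of $\tilde D_{n+4}, \tilde E_6, \tilde E_7, \tilde E_8$, deleting any leaf of mark one strictly lowers the maximum independent-set size.

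Next I would handle each family in turn. For $\tilde D_{n+4}$, the maximum independent set of size $N_v = 4+\lfloor n/2\rfloor$ is realised by taking all four extremal leaves together with alternate vertices of the central multiplicity-two chain of length $n+1$; deleting any one of the four leaves loses that vertex, and no compensation is possible since the chain was already chosen maximally, whence $N_v' = 3+\lfloor n/2\rfloor = N_v - 1$. For $\tilde E_6, \tilde E_7, \tilde E_8$, removing a simple leaf (of which there are $3, 2, 1$ respectively) produces the ordinary Dynkin diagrams $E_6, E_7, E_8$; their maximum independent sets have sizes $3, 4, 4$, each exactly one less than the corresponding $N_v = 4, 5, 5$ recorded in Table \ref{tab}. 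These three values can be confirmed in seconds by a minimum-vertex-cover argument on the relevant small graph.

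The main place to be careful is ensuring that no alternative, non-canonical maximum independent set can avoid the deleted leaf. The degenerate $\tilde D_{n+4}$ cases with $n=0$ or $1$ (where the central chain contributes nothing to the independent set) and the diagram symmetries for $\tilde E_6$ and $\tilde E_7$ (one must verify that removing any of the several simple leaves yields the same isomorphism type, which is immediate from the evident symmetries of the extended diagrams) both merit explicit verification, but in every case the check reduces to finite inspection of a small graph.
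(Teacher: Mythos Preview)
Your proposal is correct and follows exactly the route the paper indicates: the paper's entire argument is the sentence ``A case-by-case inspection teaches us that this is one less than $N_v$'', after which the inequality follows from \eqref{eq:N_v}. You have simply written out that case-by-case inspection (showing $N_v' = N_v - 1$ for each of $\tilde D_{n+4}, \tilde E_6, \tilde E_7, \tilde E_8$ by an independent-set count on the Dynkin diagram with one simple leaf removed), so the two arguments are the same.
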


\begin{rem}
The same inequality holds true when we omit any other odd multiplicity  component 
of a non-reduced fibre.
\end{rem}

For the fibres to contain 11 disjoint $(-2)$-curves, 
this implies that all inequalities \eqref{eq:N_v} are in fact inequalities,
and the classification of possible fibre types in Proposition \ref{lem:12} (ii) is still valid. 
In particular, this implies that the fibration supports indeed 12 disjoint $(-2)$-curves.
(This can also be seen by adding the component met by $O$, and maybe moving one component to a simple component,
or shortening an $A_2$ configuration to $A_1$ disjoint from the  component added.)
Since $S$ is not supersingular by assumption,
Proposition \ref{lem:=12} and the ensuing remark imply that there are in fact two non-reduced fibres.
Hence Fact \ref{fact2} applies to both of them, 
and there can be at most 10 disjoint $(-2)$-curves which are supported on the fibres 
while being orthogonal to $O$. This contradiction completes the proof that
a normal quartic whose resolution is not supersingular cannot contain 13 (or more) singular points.

\subsection{A 22-dimensional family of non-supersingular quartics with 12 nodes}

\begin{ex}
\label{ex:12}
To exhibit a quartic with 12 nodes over a field $k$ of arbitrary characteristic, in fact),
consider 4 linear forms $l_1,\hdots,l_4$ and a quadric $q\in k[x_1,x_2,x_3,x_4]$.
Then the quartic 
\begin{eqnarray}
\label{eq:family}
l_1\cdots l_4 + q^2 = 0
\end{eqnarray}
has generally 12 nodes.
To see that it is generally not supersingular,
it suffices to specialize to the case where $l_i=x_i, q=\lambda(x_1+\hdots+x_4)^2$.

Then the pencil of quartics is the classical Dwork pencil, where each quartic has  $6$ $A_3$-singularities,
(over $\CC$, it would be the mirror of the Dwork pencil, cf. \cite{ES-family}),
and at $\lambda=-1/81$  (\cite{ES-family}, section 11) we get   a K3 surface $S$ with $\rho=20$, 
both over $\QQ$ (or $\CC$) and in characteristic $2$.
To see this, we use that $S$ contains the twisted cubic $C$ parametrized by
\[
\PP^1\ni t \mapsto (-t^3 
,1-t,t,(t-1)^3).
\]
Together with the exceptional curves and the 4 obvious lines in the plane $\{x_1+\hdots+x_4=0\}$,
the twisted cubic generates $\Pic(S_\QQ) = \Pic(S_\CC)$,
a hyperbolic lattice of rank 20 and determinant $-7$.
It follows that the Picard number of the reduction $S_p=S\otimes\bar\F_p$ at a prime $p\neq 7$ is controlled
by the field $\QQ(\sqrt{-7})$ as follows:
\[
\rho(S_p) = 
\begin{cases}
20 & \text{ if $p$ splits in } \QQ(\sqrt{-7});\\
22 & \text{ if $p$ is inert in } \QQ(\sqrt{-7})
\end{cases}
\]
(cf.\ \cite[ Rem. 13]{S-Pic20}  for instance).
Since $2$ splits in $\QQ(\sqrt{-7})$, we infer that $\rho(S_2)=20$ as claimed.
\end{ex}

%

\begin{rem}
In \cite[Thm.\ 14]{cat21}, there is given a 3-dimensional family of $\mathfrak S_4$-invariant quartics
with 12 nodes, of the form 
$$ a_1 \s_1(x)^4 + a_2  \s_1(x)^2 \s_2(x) + a_3 \s_1(x) \s_3(x) + a_4 \s_4(x) + \be \s_2(x)^2 =0 ,$$
where $ a_2 a_4 = a_3^2$.

Setting $a_2= a_3=0$, we get  two-dimensional subfamily
of \eqref{eq:family} with an $\mathfrak S_4$-symmetry (that is, the linear forms $l_i$ 
become the coordinates and we let $q$ to be a symmetric polynomial).

Using the  one-dimensional subgroup of PGL$(4)$ preserving the symmetry
($x_i\mapsto \lambda x_i+\mu\sigma_1(x)$),
we see that  the surfaces in the 3-dimensional family are projectively equivalent
to the surfaces in the subfamily $a_2=a_3=0$; indeed this follows by \cite[Proposition\ 15]{cat21}
since any point of the form $(1,1,b,c)$ is transformed to a point of
the form $(0,0,1,b')$ by such a transformation with $\la + \mu (b +c)=0$.
\end{rem}

\begin{rem}
Using deformation theory, for instance starting from the above 
two-dimensional family
with an $\mathfrak S_4$-symmetry,
one can show that there are non-supersingular quartics in $\PP^3$
with any number of nodes up to 12 (in any characteristic).
Indeed, the semiuniversal deformation of nodes of equation
\[
z^2 + uv 
\]
is 
given by
\[
z^2 + uv  + a + b z .
\]
We emphasize the following dichotomy:
while $z^2 + uv  + a$  yields a smoothing in characteristic different from $2$ for $a\neq 0$, 
this is equisingular in characteristic $2$;
in turn,
$
z^2 + uv  + b z$
is equisingular in characteristic different from $2$, but yields a smoothing in characteristic $2$ for $b\neq 0$.
In either case, this can be made to work at the 12 nodes of the above shape to prove the claim.
\end{rem}

%
%
%
%
%
%
%
%

\section{Generalizations}
\label{s:general}

Proposition \ref{lem:12}
applies to any fibre class $F$ of a genus one fibration
perpendicular to more than 12 $(-2)$-curves.
Here we will extend this result to any non-trivial isotropic vector $E$ (replacing $F$);
the argument is surprisingly subtle
-- despite claims to the contrary, see the discussion in \cite{RS}, especially Remark 2.10.

A key ingredient is provided by the following result  from \cite{S-nodal}
about divisibilities among $(-2)$-curves
which, over $\CC$, can be proved  (and was already known before for some cases) by topological methods.

\begin{theo}
\label{thm:nodal}
Let $R\subset \Pic(X)$ be a root lattice generated by $(-2)$-curves on a K3 surface $X$.
Denote the primitive closure by
\[
R'=(R\otimes\QQ)\cap \Pic(X) \;\;\; 
\text{ and let } \;\;\;
D\in R'\setminus R.
\]
Then $D$ is neither effective nor anti-effective.
In particular, $D^2\leq -4$.
\end{theo}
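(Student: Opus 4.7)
First I would dispense with the consequence $D^2\leq -4$ via Riemann--Roch. Since $R$ is negative definite and $\Pic(X)$ is an even lattice, any $D\in R'\setminus R$ has $D^2\in 2\ZZ_{<0}$. If $D^2=-2$, then on the K3 surface $X$ one has $\chi(D)=2+D^2/2=1$, so $h^0(D)+h^0(-D)\geq 1$ and $D$ or $-D$ is effective. Hence the bound $D^2\leq -4$ reduces to showing that no element of $R'\setminus R$ is effective; by the symmetry $D\leftrightarrow -D$ this also rules out anti-effectivity, so the entire theorem is reduced to the single assertion: \emph{no $D\in R'\setminus R$ is effective.}

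Assume to the contrary that $D\in R'\setminus R$ is effective, and let $E_1,\dots,E_k$ be the $(-2)$-curves generating $R$. Decomposing $D$ into distinct irreducible components with positive multiplicities, split
\[
D=D_{\mathrm{in}}+D_{\mathrm{out}},
\]
where $D_{\mathrm{in}}$ collects those components that coincide with some $E_i$, and $D_{\mathrm{out}}$ collects the remaining ones. Then $D_{\mathrm{in}}$ is a non-negative integer combination of the $E_i$, hence $D_{\mathrm{in}}\in R$, and consequently $D_{\mathrm{out}}=D-D_{\mathrm{in}}\in R\otimes\QQ$. The plan is to prove $D_{\mathrm{out}}=0$, for then $D=D_{\mathrm{in}}\in R$, contradicting $D\in R'\setminus R$.

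Every irreducible component $C_j$ of $D_{\mathrm{out}}$ is different from every $E_i$, so $C_j\cdot E_i\geq 0$, and therefore $D_{\mathrm{out}}\cdot E_i\geq 0$ for all $i$. Writing $D_{\mathrm{out}}=\sum_l q_l E_l$ with $q_l\in\QQ$, this reads $Aq\geq 0$ componentwise, where $A=(E_i\cdot E_j)$ is the intersection matrix of $R$, i.e.\ the negative of the standard ADE Cartan matrix of the Dynkin type of $R$. The classical positivity of fundamental weights in simple roots for ADE root systems shows that $A^{-1}$ has all entries $\leq 0$; applying it to $Aq\geq 0$ yields $q=A^{-1}(Aq)$ with $q_l\leq 0$ for every $l$.

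The contradiction then comes from pairing with an ample class $H\in\Pic(X)$. Let $p$ denote orthogonal projection onto $R\otimes\QQ$ (well-defined by negative definiteness) and let $\omega_i\in R\otimes\QQ$ be the dual weights defined by $\omega_i\cdot E_j=\delta_{ij}$; then $p(H)=\sum_i (H\cdot E_i)\,\omega_i$. Since $D_{\mathrm{out}}\in R\otimes\QQ$ is orthogonal to $R^\perp$, one computes
\[
H\cdot D_{\mathrm{out}}=p(H)\cdot D_{\mathrm{out}}=\sum_i (H\cdot E_i)\,q_i.
\]
Ampleness of $H$ forces $H\cdot E_i>0$, while $q_i\leq 0$ by the previous step, so $H\cdot D_{\mathrm{out}}\leq 0$. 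On the other hand $D_{\mathrm{out}}$ is effective and $H$ ample, whence $H\cdot D_{\mathrm{out}}\geq 0$ with equality iff $D_{\mathrm{out}}=0$. Thus $D_{\mathrm{out}}=0$, completing the proof. The most delicate ingredient, and what I expect to be the main obstacle, is the sign structure of $A^{-1}$: it is precisely the ADE-specific positivity of the inverse Cartan matrix that lets the lattice-theoretic inequality confront the geometric ample-vs-effective inequality and force the conclusion.
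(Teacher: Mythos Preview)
Your argument is correct. The paper does not supply its own proof of this statement; it imports the result from \cite{S-nodal} (noting that over $\CC$ it was known earlier by topological methods), so there is no in-paper argument to compare against. Your proof is self-contained and follows the natural line: split an effective $D$ into the part $D_{\mathrm{in}}$ supported on the generating $(-2)$-curves and the residual $D_{\mathrm{out}}\in R\otimes\QQ$, use the entrywise non-positivity of the inverse (negative) Cartan matrix to force all coefficients $q_i\leq 0$, and then play this off against $H\cdot D_{\mathrm{out}}\geq 0$ for an ample $H$.

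Two small remarks. First, the passage through the projection $p(H)$ and the dual weights $\omega_i$ is a detour: once you write $D_{\mathrm{out}}=\sum_i q_i E_i$, the identity $H\cdot D_{\mathrm{out}}=\sum_i q_i\,(H\cdot E_i)$ is immediate by bilinearity, without introducing $p(H)$. Second, you are tacitly assuming that the $(-2)$-curves $E_1,\dots,E_k$ form a \emph{basis} of $R$ (so that $A$ is invertible and the expansion $D_{\mathrm{out}}=\sum_l q_l E_l$ is well-defined); since $R$ is assumed to be a root lattice, any two of the $E_i$ satisfy $E_i\cdot E_j\in\{0,1\}$, and negative definiteness forces the dual graph to be a disjoint union of ADE diagrams, hence the $E_i$ are indeed linearly independent. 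This is the intended reading and is satisfied in every application in the paper, but it is worth making explicit.
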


Let us now state the generalization of Proposition \ref{lem:12} (i), the main characteristic 2 result for this section.

\begin{prop}
\label{prop:general}
Let $X$ be a K3 surface in characteristic $2$,
endowed with a non-zero isotropic vector $E\in\Pic(X)$ and
at least 13 disjoint $(-2)$-curves orthogonal to $E$.
Then the genus one fibration induced by $E$ is quasi-elliptic.
\end{prop}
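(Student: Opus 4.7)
The strategy is to derive a contradiction from the assumption that the genus one fibration induced by $E$ is elliptic (the only alternative to quasi-elliptic in characteristic $2$, given that $S$ is non-supersingular in the application), by producing 13 disjoint $(-2)$-curves contained in fibres and invoking Proposition \ref{lem:12}(i).

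After normalizing so that $E$ is primitive in $\Pic(X)$ and lies in the closure of the positive cone (neither change affects orthogonality with the $C_i$'s), let $F \in \Pic(X)$ be the primitive nef isotropic class corresponding to the elliptic fibration. By Riemann--Roch one has $h^0(E)\geq 2$, so we can write $E = nF + Z$ with $n \geq 1$ and $Z$ an effective integral combination of $(-2)$-curves — the fixed part of $|E|$ after subtracting the moving multiple of $F$. The equation $0 = C_i \cdot E = n(C_i \cdot F) + C_i \cdot Z$, combined with $C_i \cdot F \geq 0$ (since $F$ is nef), yields $C_i \cdot Z \leq 0$. For any $C_i$ not appearing as a component of $Z$, effectivity forces $C_i \cdot Z \geq 0$, hence $C_i \cdot F = 0$, so $C_i$ is a fibre component of $|F|$. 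In particular, if no $C_i$ is a multi-section of the $F$-fibration (i.e.\ if each $C_i$ satisfies $C_i \cdot F = 0$), all 13 of the $C_i$'s lie in fibres and Proposition \ref{lem:12}(i) yields the contradiction $13 \leq 12$.

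The remaining and delicate case is to exclude the possibility that some $C_i$ is a multi-section contained in $Z$ (with $C_i \cdot F > 0$), and this is where Theorem \ref{thm:nodal} is to be used. The $C_i$'s generate a root sublattice $R \cong A_1^{13}$ of $\Pic(X)$, and by Theorem \ref{thm:nodal} its primitive closure $R' = (R \otimes \QQ) \cap \Pic(X)$ contains no $(-2)$-classes beyond $\pm C_1,\ldots,\pm C_{13}$: any $D\in R'\setminus R$ has $D^2\leq -4$. The plan is to show that a multi-section $C_i$ inside $Z$ forces an explicit $\QQ$-combination of $E$, $F$, and the $C_j$'s to lie in $R'$ while having square $-2$ — either directly, or by reflecting in $C_i$ and exploiting the orthogonality relations $C_i\cdot C_j=0$ together with the identity $E=nF+Z$ — contradicting Theorem \ref{thm:nodal}. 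This forces $Z=0$ (so $E=nF$), reducing to the case already handled.

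The main obstacle is precisely this last step: constructing the forbidden class in $R'\setminus R$ explicitly from the data of a multi-section. This requires careful bookkeeping of the lattice embeddings of $\mathbb{Z} E\oplus R$ and of $\mathbb{Z} F + \mathbb{Z} Z + R$ inside $\Pic(X)$, and it is precisely the subtle point that distinguishes Proposition \ref{prop:general} from Proposition \ref{lem:12}(i), explaining why naive arguments (as in the cautionary remark about \cite{RS}, Remark~2.10) fail without the input of Theorem \ref{thm:nodal}.
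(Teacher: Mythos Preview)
Your proposal is not a complete proof: you explicitly leave open the case where some $C_i$ is a multi-section of $|F|$, and the ``plan'' to construct a forbidden root in $R'\setminus R$ from the data of such a multi-section is never carried out. This gap is the entire content of the proposition --- the case where all $C_i$ already satisfy $C_i\cdot F=0$ is immediate from Proposition~\ref{lem:12}(i), as you note, so what remains is precisely the subtle part.

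The paper does not proceed by writing $E=nF+Z$ and trying to force $Z=0$. Instead it applies a composition $\sigma$ of reflections in $(-2)$-curves so that $\sigma(E)=F$ is the nef fibre class; the images $B_i=\sigma(C_i)$ are then mutually orthogonal $(-2)$-\emph{divisors} (no longer irreducible in general) which are automatically orthogonal to $F$, hence each supported on a single fibre $F_v$. The problem becomes: can a fibre $F_v$ carry more than $N_v$ such $B_i$? This is the separate Proposition~\ref{prop:N_v}, and its proof is the technical core. One passes modulo $F$ to an embedding $M\cong A_1^r\hookrightarrow V$ into the Dynkin lattice of the fibre, shows via $2$-length and discriminant comparisons that the primitive closure $M'$ satisfies $[M':M]\geq 4$, and exhibits a subgroup $H\subset M'/M$ of order $4$ represented by roots. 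A parity argument then shows that one of these roots lifts (before taking the quotient by $F$) to a vector of $\langle B_1,\ldots,B_r\rangle$ which is $2$-divisible in $\Pic(X)$; applying $\sigma^{-1}$ yields a root in $R'\setminus R$, and \emph{this} is where Theorem~\ref{thm:nodal} gives the contradiction. You correctly anticipate that Theorem~\ref{thm:nodal} is the key input, but the mechanism --- reflections followed by a fibre-by-fibre lattice analysis, rather than a direct study of one multi-section inside $Z$ --- is what your proposal is missing.
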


\begin{proof}
By standard arguments (see e.g.\ \cite[\S 5]{CS}), we may assume that $E$ is effective and primitive,
and we can apply a composition of reflections $\sigma$ in $(-2)$-curves
such that $E'=\sigma(E)$ has no base locus, i.e. it is the fibre class $F$ of a genus one fibration on $X$.

Denote the $(-2)$-curves perpendicular to $E$ by $C_1,\hdots,C_s \, (s>12)$.
The reflections map these curves to $(-2)$-divisors 
\[
B_i = \sigma(C_i) \perp F
\]
which are effective or anti-effective by Riemann--Roch and thus supported on the fibres of $|F|$
-- or more precisely on a single fibre $F_v$ each.
Proposition \ref{prop:general}
will follow at once as soon as we know that
the number of mutually orthogonal $(-2)$-divisors $B_i$ obtained as above and supported on a single fibre $F_v$ 
 of an elliptic fibration 
still satisfies \ref{eq:N_v}.
This follows in greater generality from the next instrumental proposition.
\end{proof}

The next auxiliary result arises naturally from the problem of embedding $(-2)$-curves 
into reducible fibres via reflections. 
We emphasize that it does not depend on the characteristic.

\begin{prop}
\label{lemma:N_v}
\label{prop:N_v}
Let $X$ be a K3 surface  in arbitrary characteristic 
 endowed with a genus one fibration.
Assume that there are mutually orthogonal $(-2)$-divisors $B_1, \hdots, B_r$ supported on a single fibre $F_v$
which have arisen from disjoint $(-2)$-curves by way of reflections as above.
Then $r\leq N_v$ for $N_v$ from Table \ref{tab} which we reproduce here for the convenience of the reader.
\end{prop}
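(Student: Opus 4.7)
My plan is to pass from the $(-2)$-divisors $B_i$ to the chamber/wall structure on the affine root system attached to $F_v$, exploiting that the isometries of the Picard lattice send walls of chambers to walls of chambers.

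First I would reduce to effective divisors on a single fibre: by Riemann--Roch each $B_i$ or its negative is effective, and flipping signs of individual $B_i$'s preserves pairwise orthogonality, so I may assume every $B_i$ is effective. Since $B_i \cdot F = 0$, $B_i^2 = -2$, and components of distinct fibres are mutually orthogonal, $B_i$ must be supported on the single fibre $F_v$. Writing $B_i = \sum_k c_{i,k}\Theta_{v,k}$ with $c_{i,k}\in\mathbb{Z}_{\geq 0}$, each $B_i$ lies in the root sublattice $R_v\subset\Pic(X)$ generated by the irreducible components $\Theta_{v,k}$; this is an affine root lattice of the type listed in Table \ref{tab}, with radical $\mathbb{Z}F_v$. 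An application of Theorem \ref{thm:nodal} to $R=R_v$ confirms this placement: classes of $R'_v\setminus R_v$ have self-intersection $\leq -4$, so the $(-2)$-divisors $B_i$ cannot escape $R_v$.

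The decisive step is to recognize the $B_i$'s as walls of a Weyl chamber. The $C_i$'s are $(-2)$-curves, hence walls of the nef chamber $\mathrm{Nef}(X)$, and $\sigma$ is an isometry of $\Pic(X)$; therefore $B_i = \sigma(C_i)$ are walls of the reflected chamber $\sigma(\mathrm{Nef}(X))$. Restricting this chamber picture to the affine subsystem $\widetilde{Q}_v$ attached to $F_v$, the walls supported on $F_v$ are precisely the simple roots of a chamber of the affine Weyl group of $\widetilde{Q}_v$, which form a copy of the affine Dynkin diagram read off from Table \ref{tab}. Pairwise orthogonal simple roots therefore correspond to an \emph{independent} subset of vertices, whose maximum size is $N_v$ by definition; hence $r\leq N_v$.

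The main obstacle is making the ``restriction to the fibre" rigorous: although $\sigma(\mathrm{Nef}(X))$ is a chamber in the ambient Coxeter structure of $\Pic(X)$, one has to justify that its walls supported on $F_v$ really do form the simple roots of a single chamber of the affine Weyl group of $\widetilde{Q}_v$ (not, say, a mixture drawn from several chambers). This point also explains why the naive maximum of pairwise orthogonal effective $(-2)$-classes in $R_v$ can strictly exceed $N_v$ for $D$- and $E$-type fibres: configurations such as $\{F_v - \Theta_{v,0},\; \Theta_{v,1},\; F_v - \Theta_{v,0} - \Theta_{v,1} - 2\Theta_{v,2} - \Theta_{v,3},\; \Theta_{v,3},\; \Theta_{v,5},\; \Theta_{v,6}\}$ on an $I_2^*$ fibre produce six pairwise orthogonal effective $(-2)$-classes against $N_v=5$, yet they cannot be simultaneously walls of any chamber of $\widetilde{D}_6$ and therefore cannot arise from reflections of a disjoint system of $(-2)$-curves. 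Carrying out the identification of the restricted chamber with a chamber of $\widetilde{Q}_v$ cleanly, case by case over the Kodaira types in Table \ref{tab}, is what I expect to require the real technical work.
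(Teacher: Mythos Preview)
Your approach is genuinely different from the paper's, and the gap you flag as the ``main obstacle'' is indeed the crux --- but it is a real gap, not just technical bookkeeping.

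The specific problem is this: you want the walls of $\sigma(\mathrm{Nef}(X))$ that happen to lie in $R_v$ to form (a subset of) a simple system for a single alcove of the affine Weyl group of~$\tilde V$. But the alcove structure for an affine root system lives on an affine hyperplane (equivalently, on linear functionals with fixed positive pairing against the null root $F$); it does \emph{not} arise from the criterion ``positive root not expressible as a sum of two positive roots'' inside $R_v$. Already in type $\tilde A_1$ no real root is a sum of two real roots, so your naive simplicity criterion would declare every positive root simple --- showing that the restriction step cannot be carried out by the argument you sketch. More substantively, an ample $h$ (or $\sigma(h)$) does determine an alcove, but identifying the walls of the ambient chamber lying in $R_v$ with the walls of that alcove requires ruling out that an alcove wall decomposes as a sum of two positive roots of $\Pic(X)$ lying \emph{outside} $R_v$. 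Your proposal offers no mechanism to control this, and a case-by-case check over Kodaira types does not obviously close the gap either, because the obstruction involves the ambient Picard lattice, not just the fibre.

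The paper proceeds along entirely different lines. It passes modulo $F$ to the finite root lattice $V$, so that $M\cong A_1^r$ embeds in $V$; discriminant-group and $2$-length bounds force $[M':M]\geq 4$, and one then locates a Klein four-subgroup of $M'/M$ represented by roots. A parity argument with the fibre multiples produces a class in the $\mathbb Z$-span of the $B_i$ which is $2$-divisible in $\Pic(X)$; transporting back along $\sigma^{-1}$ yields a root in $(\langle C_1,\dots,C_r\rangle\otimes\mathbb Q)\cap\Pic(X)$ outside $\langle C_1,\dots,C_r\rangle$, contradicting Theorem~\ref{thm:nodal}. So Theorem~\ref{thm:nodal} is the essential input that encodes the hypothesis ``arising from disjoint $(-2)$-curves via reflections''; in your outline it appears only as a redundant sanity check, while the real work is deferred to an unproved chamber-restriction principle.
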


 \begin{table}[ht!]
 \begin{tabular}{c||c|c|c|c|c|c|c|c}
 fibre type of $F_v$ & I$_n \, (n\geq 0)$ & II & III & IV & I$^*_n \, (n\geq 0)$  & IV$^*$ & III$^*$ & II$^*$\\
 \hline
 $N_v$ & $\lfloor n/2\rfloor$ & $ 0 $ & $1$ & 1 & $\lfloor 4+n/2\rfloor$  & 4 & $5$ & $5$
 \end{tabular}
 \end{table}
 
 \begin{rem}
 The entries of the table are compatible with the bounds in characteristic $2$  in \ref{eq:N_v}
which are due to the wild ramification.
 \end{rem}

 \begin{rem}
The statement of Proposition \ref{lemma:N_v} is not valid without the assumption that the $B_i$'s  arise from disjoint $(-2)$-curves
by way of reflections. In fact, each of the lattices  $D_{2n} \, (n\geq 2)$ admits $A_1^{2n}$ as a finite index sublattice
(as we shall exploit in \ref{sss:D_2m}), so $A_1^{2n}$ embeds into the corresponding fibre,
but for $n>2$  this is not compatible with the assumptions in the proposition. (For $n=2$, compare Example \ref{ex}.)
 \end{rem}

\begin{proof}
Assume that the $(-2)$-divisors $B_1,\hdots, B_r$ are supported on a single fibre $F_v$.
In particular, each $B_i$ embeds into the negative-semi-definite root lattice of $F_v$.
Here classical arguments (cf.\ e.g.\ \cite{Nishi}) 
imply that the embedding always is unique up to reflections
in fibre components -- and up to fibre multiples (part of the  subtlety announced, see Example \ref{ex}).
Hence we will continue to apply the appropriate reflections 
without introducing new notation for $\sigma$ and for the $B_i$'s.

\subsection{Type $\tilde A_n$}
We start with Dynkin type $\tilde A_n \, (n>0)$ (corresponding to Kodaira type I$_{n+1}$).
If $n=1$, we're done. Else we get (e.g.\ by \cite[Cor.\ 4.4]{Nishi})
\[
B_1^\perp = \ZZ F \oplus L \;\;\; \text{ with } \;\;\; L_{\mbox{root}} = A_{n-2}.
\]
Inductively we can thus read off that $\tilde A_n$ supports no more than $(n+1)/2$ 
orthogonal $(-2)$-divisors, confirming the claim.

\subsection{Additive fibre types}
Turning to the remaining  Dynkin types $ V =  D_n,  E_n$,
the fibre corresponds to the extended Dynkin type $\tilde V = \tilde D_n, \tilde E_n$.
We fix an embedding 
$$
\iota: \;\; V\hookrightarrow \tilde V$$
(usually obtained by omitting any fixed simple fibre component of $F$).
In the converse direction,
there is a surjection
\[
\pi: \tilde V \to V
\]
obtained by considering divisors modulo $F$,
which is a left inverse for $\iota$.
In particular, $\pi$ is injective on the 
$\ZZ$-span $\tilde M=\langle B_1,\hdots,B_r\rangle_\ZZ\subset \tilde V$ 
of the orthogonal 
$(-2)$-classes $B_1,\hdots, B_r$:
\[
\pi|_{\tilde M}: \tilde M \hookrightarrow V.
\]
Then we can uniquely (up to the given choice of fibre component) identify 
\[
v\in\pi(\tilde M)=:M \;\;\; \text{ with a vector} \;\;\; 
v+ m_v F \in \tilde M \,\;\; (m_v\in\ZZ).
\]
It is this fibre multiple which makes things quite subtle as illustrated by the next example.

\begin{ex}
\label{ex}
For fibre type $\tilde D_4$, there is nothing to prove in Proposition \ref{prop:N_v}
as there are
obviously  four $(-2)$-curves disjoint embeddings into such a fibre (and no more). 
Moreover, the embedding
\[
\tilde M = 4 A_1 \hookrightarrow \tilde D_4
\]
is primitive, but this ceases to hold true  modulo the fibre.
Indeed, then $M=4A_1$ has index two inside $D_4$,
and this is explained by the fact that
 the sum of the curves and the fibre becomes 2-divisible.
\end{ex}

Note that, if Proposition \ref{prop:N_v} were not to hold for Dynkin type $\tilde D_5$,
then the above arguments would give an embedding $A_1^5\hookrightarrow D_5$.
Since the square classes of the discriminants of the two lattices do not agree, this is impossible.
Hence we will assume that $n\geq 6$ in what follows.

\subsection{Proof strategy}

Consider $\tilde V = \tilde D_n, \tilde E_n\; (n\geq 6)$
and assume that the $(-2)$-divisors 
$B_1,\hdots, B_r$ embed orthogonally into $\tilde V$, with $r$ exceeding the bound given in Proposition \ref{prop:N_v} by one.
We shall now consider 
\begin{itemize}
\item
the lattice $M=\langle\pi(B_1),\hdots,\pi(B_r)\rangle_\ZZ\cong A_1^r\subset V$,
\item
its primitive closure $M'=(M\otimes\QQ)\cap V$, and 
\item
its orthogonal complement $M^\perp\subset V$.
\end{itemize}
We  proceed by comparing their $2$-lengths $l_2(M)$, i.e.\ the lengths (defined as the minimum number of generators)
of the 2-parts of their discriminant groups $A_M = M^\vee/M$ etc
(using the theory laid out by Nikulin \cite{Nikulin}).
The proof has 3 steps:
\begin{enumerate}
\item[1.]
prove that $[M':M]\geq 4$ (Lemma \ref{lem:index});
\item[2.]
show that $M'/M$ contains a subgroup of size 4 
whose elements are represented by roots (Lemma \ref{lem:roots});
\item[3.]
derive a contradiction using Theorem \ref{thm:nodal}.
\end{enumerate}

\subsection{Index $[M':M]$}

\begin{lemma}
\label{lem:index}
In the above set-up, we have the index $[M':M]\geq 4$.
\end{lemma}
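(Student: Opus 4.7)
The plan is to show $\dim_{\F_2} H \geq 2$, where $H = M'/M \subset A_M$ and $\dim H = \log_2 [M':M]$, by means of a two-length inequality read off from the discriminant form theory of Nikulin \cite{Nikulin}. The key observation is that $A_M = M^\vee/M \cong (\ZZ/2)^r$ is 2-elementary, so every auxiliary group forced onto the stage is automatically 2-elementary. First I would record that $H$, being a subgroup of $A_M$, is 2-elementary, and consequently $A_{M'} = H^\perp/H$ (with $\perp$ and quotient computed inside $A_M$) is 2-elementary of $\F_2$-dimension
\[
l_2(A_{M'}) \;=\; r - 2\dim H.
\]
Writing $n = \rank(V)$ and setting $N = M^\perp_V = (M')^\perp_V$ (a primitive sublattice of rank $n-r$), I would likewise note that $A_N$ is generated by at most $\rank(N)$ elements, so $l_2(A_N) \leq n-r$.

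The central ingredient is the two-length inequality for primitive embeddings,
\[
l_2(A_{M'}) \;\leq\; l_2(A_V) + l_2(A_N),
\]
which I would derive as follows. The glue $\Gamma = V/(M' \oplus N)$ embeds isotropically in $A_{M'} \oplus A_N$ and projects injectively to each factor (by primitivity of both $M'$ and $N$), and $A_V \cong \Gamma^\perp/\Gamma$. Taking $\F_2$-ranks through the short exact sequences
\[
0 \to \Gamma \to \Gamma^\perp \to A_V \to 0, \qquad 0 \to \Gamma \to A_{M'}\oplus A_N \to (A_{M'}\oplus A_N)/\Gamma \to 0,
\]
and using the Pontryagin identification $\Gamma^\perp \cong \widehat{(A_{M'}\oplus A_N)/\Gamma}$ (so that the two have equal $\F_2$-ranks), one obtains
\[
l_2(A_V) \;\geq\; l_2(A_{M'}) + l_2(A_N) - 2\,l_2(\Gamma).
\]
The claim then follows from $l_2(\Gamma) \leq l_2(A_N)$, which comes from the injection $\Gamma \hookrightarrow A_N$.

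Combining these ingredients and rearranging produces
\[
\dim H \;\geq\; r - \tfrac12 \bigl( n + l_2(A_V) \bigr),
\]
after which a case-by-case substitution of $r = N_v + 1$ from the table in Proposition \ref{prop:N_v}, together with the values $l_2(A_{D_n}) = 2$ for $n$ even, $l_2(A_{D_n}) = 1$ for $n$ odd, $l_2(A_{E_7}) = 1$, and $l_2(A_{E_8}) = 0$, uniformly yields $\dim H \geq 2$. For example, $V = D_n$ with $n$ even has $r = n/2 + 3$ and the right-hand side equals exactly $2$; an analogous computation works for $D_n$ with $n$ odd (with $r=(n+5)/2$), for $E_7$ ($r=6$), and for $E_8$ ($r=6$). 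Hence $[M':M] = 2^{\dim H} \geq 4$.

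The main obstacle is pinning down the two-length inequality in precisely the shape required; this calls for careful verification that $\Gamma$ embeds in both $A_{M'}$ and $A_N$ (where primitivity of both sides enters decisively) and a cautious bookkeeping of $\F_2$-ranks through the Pontryagin identification. Once that is in place, the numerics are straightforward and, reassuringly, uniformly tight across all fibre types under consideration, so the bound $[M':M] \geq 4$ is essentially optimal.
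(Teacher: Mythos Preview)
Your proof is correct and follows essentially the same approach as the paper: both establish the key inequality $l_2(A_{M'}) \leq l_2(A_V) + l_2(A_{M^\perp})$, bound $l_2(A_{M^\perp})$ by its rank $n-r$, and then run the numerics type by type. Your derivation of the inequality via the Pontryagin identification $\Gamma^\perp \cong \widehat{(A_{M'}\oplus A_N)/\Gamma}$ and $\F_2$-rank bookkeeping is slightly more abstract than the paper's explicit generator-lifting argument, and your packaging of the endgame into the single formula $\dim H \geq r - \tfrac12(n + l_2(A_V))$ is a bit cleaner than the paper's detour through $\mu = l_2(M) - l_2(M')$, but the substance is identical. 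One small omission: you list $D_n$ (even and odd), $E_7$, and $E_8$ but not $E_6$, which is part of the set-up ($n\geq 6$); your formula handles it too, since $l_2(A_{E_6})=0$ and $r=5$, $n=6$ give $\dim H \geq 2$.
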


\begin{proof}
The next table collects all data relevant to prove the lemma.
The first 2 rows are immediate;
for the third row, compare \cite[Table 2.4, p.\ 33]{MWL}.
For the 4th row, we use that, with $M'\oplus M^\perp$ embedding into $V$
(with summands embedding primitively by definition),
the discriminant groups of $M'$ and $M^\perp$ have to be compatible,
i.e.\ each contains the cokernel $V/(M'\oplus M^\perp)$ as a subgroup.
In practice, this can be quite complicated,
but at any rate it gives the bound
\begin{eqnarray}
\label{eq:l_2}
l_2(M') \leq l_2(V) + l_2(M^\perp),
\end{eqnarray}
 as we are  now going to show.

Indeed, write $N : = M'$, so that $N ,  N'  : = N^\perp = M^\perp $ are primitively embedded into $V$,
that is, we can write $V= N \oplus T = T'  \oplus N'$ which is a direct sum, but not orthogonal.

The roles of $N, N'$ are symmetric, and we observe that we have a series of inclusions 
$$ 0 \subset N \oplus N' \subset V \subset V^{\vee} \subset N^{\vee} \oplus (N')^{\vee}$$
where some  inclusions  are given by the quadratic form.
Let us first take the quotient by $N$: we get then
$$ 0 \subset  N' \subset T \subset N^{\vee}/ N \oplus T^{\vee} \subset N^{\vee}/ N \oplus (N')^{\vee},$$
then we take the quotient by $N'$, yielding
$$ 0 \subset  T/ N' \subset N^{\vee}/ N \oplus T^{\vee}/N'  \subset N^{\vee}/ N \oplus (N')^{\vee}/ N'.$$

To obtain a system of generators of the discriminant group $N^{\vee} / N$ 
it suffices to lift a system of generators of  $V^{\vee} / V$, the quotient of the second by the first piece of the filtration, 
 and then  to lift a system of generators of  $(N')^{\vee} / N'$, 
 which contains $T/ N'$: then the submodule generated by above elements
 surjects onto $(N)^{\vee} / N$.
 The same argument applies for the generators of the binary parts.

Since the length is trivially bounded by the rank, i.e.\ $l_2(M^\perp)\leq \mathrm{rk} M^\perp = n-r$,
adding the entries in the second and third rows gives the bounds in the fourth row.

 \begin{table}[ht!]
 $$
 \begin{array}{c||c|c|c|c|c}
 \text{Dynkin type }  V&  D_{2m} \, (m>2) &  D_{2m+1} \, (m>2)  & E_6 & E_7 & E_8\\
 \hline
 \hline
 r = \mathrm{rk} M & m+3 & m+3 & 5 & 6 & 6\\
 n - r = \mathrm{rk} M^\perp & m-3 & m-2 & 1 & 1 & 2\\
 l_2(V) & 2 & 1 & 0 & 1 & 0\\
 l_2(M') \leq & m-1 & m-1 & 1 & 2 & 2\\
 \end{array}
 $$
 \end{table}
 
 Finally we compare the 2-lengths of $M$ and $M'$.
 Letting $\mu = l_2(M) - l_2(M')$, we infer that
 $2^\mu\det(M')\mid\det(M)$, 
 so standard formulas yields
 \[
 [M':M] \geq 2^{\lceil\frac\mu 2\rceil}.
 \]
 Since $l_2(M)=r$, we get $\mu\geq 4$ for each Dynkin type. 
 This  implies the lemma.
\end{proof}

\subsection{Roots in $M'\setminus M$}

\begin{lemma}
\label{lem:roots}
There is a subgroup $H\subset M'/M$  of size $4$ 
whose elements are represented by roots.
\end{lemma}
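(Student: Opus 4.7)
The plan is to translate the assertion into a statement about doubly-even binary codes and then to handle the fibre types $E_n$ and $D_n$ separately.

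Since $M \cong A_1^r$ with orthogonal basis $B_1,\ldots,B_r$, the discriminant group $M^\vee/M$ is naturally identified with $\FF_2^r$ via $\sum \epsilon_i(B_i/2)\bmod M\mapsto(\epsilon_1,\ldots,\epsilon_r)$. Let $S(\bar v):=\{i:\epsilon_i=1\}$ denote the support of a class $\bar v$. The representative $v=\frac12\sum_{i\in S(\bar v)}B_i\in M^\vee$ has square $-|S(\bar v)|/2$. For $\bar v$ to lie in $M'/M\subseteq M^\vee/M$, the element $v$ must belong to $V$; since $V$ is even, $|S(\bar v)|\equiv 0\pmod 4$. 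Thus $M'/M$ embeds into $\FF_2^r$ as a doubly-even binary linear code. Moreover, the coset of $\bar v$ contains a root of $V$ precisely when $|S(\bar v)|=4$: the maximal square among representatives in $v+M$ is $-|S(\bar v)|/2$, and this equals $-2$ exactly when $|S(\bar v)|=4$. Hence the lemma reduces to finding a two-dimensional subspace of $M'/M$ whose non-zero codewords all have weight $4$.

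For $V\in\{E_6,E_7,E_8\}$ one has $r\leq 6$, so weights divisible by $4$ and at most $r$ automatically lie in $\{0,4\}$; by Lemma~\ref{lem:index}, $\dim_{\FF_2}(M'/M)\geq 2$, and any two-dimensional subspace does the job. For $V=D_n$ (with $n\geq 6$), the projections $v_i:=\pi(B_i)\in V$ are pairwise orthogonal roots of $D_n$, hence of the form $\pm e_a\pm e_b$; two such roots are orthogonal exactly when their coordinate supports $\{a,b\}$ are disjoint or coincide (with differing sign patterns). This partitions the $v_i$'s into $p$ \emph{pairs} (sharing the same coordinate support) and $s$ \emph{singletons} (with distinct coordinate supports), constrained by $r=2p+s$ and $p+s\leq\lfloor n/2\rfloor$. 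Substituting $r=m+3$ for $V\in\{D_{2m},D_{2m+1}\}$ gives $p\geq 3$. Pick three pairs with coordinate supports $\{i_\alpha,j_\alpha\}$, $\alpha=1,2,3$, and write $v_\alpha^\pm=e_{i_\alpha}\pm e_{j_\alpha}$. The vectors $w_{\alpha\beta}:=e_{i_\alpha}+e_{i_\beta}$ lie in $D_n$ (coordinate sum $2$), are roots (square $-2$), and lie in $M\otimes\QQ$ via $e_{i_\alpha}=\frac12(v_\alpha^++v_\alpha^-)$; they thus represent weight-$4$ classes in $M'/M$. The identity $w_{12}+w_{13}-w_{23}=2e_{i_1}=v_1^++v_1^-\in M$ shows that $[w_{12}],[w_{13}],[w_{23}]$ together with $0$ form the required subgroup $H$ of size $4$.

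The main obstacle I anticipate is the pair-and-singleton combinatorics for $D_n$—specifically the inequality $p\geq 3$—which depends crucially on $r$ exceeding $N_v$ by exactly one. The exceptional-type case and the code-theoretic reformulation are essentially formal once this setup is in place.
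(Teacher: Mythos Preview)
Your proof is correct. The code-theoretic framing and the treatment of $E_n$ (via $r\le 6$, forcing all non-zero doubly-even weights to equal $4$) coincide with the paper's observation that for $r<8$ every isotropic class in $A_M$ is represented by a root. The genuine divergence is for $D_n$. The paper proceeds structurally: it uses $A_1^\perp\cong A_1\oplus D_{n-2}$ inside $D_n$ iteratively to show that (for $D_{2m}$) $M'$ contains $D_6\oplus A_1^{m-3}$, so that $H\cong D_6/A_1^6\cong(\ZZ/2\ZZ)^2$ is already visible inside a $D_6$ block; the odd case $D_{2m+1}$ is then reduced to $D_{2m}$ by a separate lemma showing that any $A_1^r\hookrightarrow D_{2m+1}$ factors through a primitive $D_{2m}$. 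Your argument instead works directly in the standard coordinate model of $D_n$: orthogonality forces the coordinate supports of the $\pi(B_i)$ to be pairwise disjoint or equal, and the counting $2p+s=r$, $p+s\le\lfloor n/2\rfloor$ immediately gives $p\ge 3$, from which the explicit roots $w_{\alpha\beta}=e_{i_\alpha}+e_{i_\beta}$ furnish $H$. This is shorter and handles even and odd $n$ uniformly, at the cost of invoking the coordinate realization of $D_n$ rather than purely intrinsic lattice properties; the paper's approach, by contrast, makes the overlattice $D_6\subset M'$ explicit, which is conceptually closer to how $H$ is actually used in \S\ref{ss:78}.
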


\begin{proof}
The overlattice $M'$ is encoded in some isotropic subgroup of $A_M$
(of size at least $4$ by Lemma \ref{lem:index}).
Since $M\cong A_1^r$, 
we have $A_M\cong(\ZZ/2\ZZ)^r$, and all isotropic vectors in $A_M$ are represented by roots
if $r<8$
(precisely $(a_1+\hdots+a_4)/2$ and its permutations). 
This settles the lemma for $n\leq 9$
and leaves types $\tilde D_n$ for $n\geq 10$.
We shall use the standard fact (cf.\ e.g.\ \cite[Cor.\ 4.4]{Nishi}) that 
\begin{eqnarray}
\label{eq:D_n}
A_1^\perp \cong A_1 \oplus  D_{n-2}\subset D_{n} \;\;\; \forall \, n\geq 4
\end{eqnarray}
(with the  convention that $D_2=A_1^2$  and $D_3=A_3$).

Geometrically, we can see this from the diagram

\begin{figure}[ht!]
\setlength{\unitlength}{.6mm}
\begin{picture}(100,25)(5,-5)
\put(-40,7){$(D_n)$}
\multiput(23,8)(20,0){4}{\circle*{1.5}}
\put(23,8){\line(1,0){23}}
\put(83,8){\line(-1,0){23}}
\put(49,8){$\hdots$}
\put(83,8){\line(2,1){20}}
\put(83,8){\line(2,-1){20}}
\put(103,18){\circle*{1.5}}
\put(103, -2){\circle*{1.5}}

\put(23,8){\line(-1,0){20}}
\put(3,8){\circle*{1.5}}

\put(-3,1){$d_1$}
\put(22,1){$d_2$}
\put(78,1){$d_{n-2}$}
\put(106,17){$d_{n-1}$}
\put(106, -2){$d_{n}$}

\end{picture}
\end{figure}

by taking the original $A_1$ to be generated by $d_1$.
Then its orthogonal complement visibly contains $ \langle d_3,\hdots,d_n\rangle\cong D_{n-2}$,
but also another orthogonal summand generated by the fundamental cycle
$\gamma_n = d_1+2(d_2+\hdots+d_{n-2})+d_{n-1}+d_n$.

\subsubsection{Type $\tilde D_{2m} \; (m>4)$}
\label{sss:D_2m}

Applied successively to each summand of $M=A_1^r \; (r=m+3)$ embedding into $D_{2m}$, 
equation \eqref{eq:D_n}
implies  
that $M^\perp$ is an overlattice of $A_1^{m-3}$ 
(since each $D_{2k}$ is an overlattice of $A_1^{2k}$).
Apply now \eqref{eq:D_n} to the finite index sublattice $A_1^{m-3}\subset M^\perp$
to find the finite index inclusions
\[
M' = (M^\perp)^\perp \supseteq D_6\oplus A_1^{m-3} \supset M.
\]
The rightmost inclusion identifies the desired subgroup  $H\subset M'/M$
as $H \cong D_6/A_1^6\cong (\ZZ/2\ZZ)^2$
whose elements are represented by roots
(by the same argument used to reduce to $n\geq 10$).

\subsubsection{Type $\tilde D_{2m+1}\; (m>4)$}

This case is reduced to the previous one by the following general lemma:

\begin{lemma}
For all $r, m>0$, any embedding $A_1^r\hookrightarrow D_{2m+1}$ factors through $D_{2m}$
(primitively embedded in $D_{2m+1}$).
\end{lemma}

\begin{proof}
{
The statement about the primitive embedding is obvious
since otherwise the primitive closure $D'$ of $D_{2m}$ inside $D_{2m+1}$ would be a unimodular lattice
(by inspection of its discriminant group),
its orthogonal complement would be a rank one summand $T$,
and the sum $D'\oplus T$ would be equal to $D_{2m+1}$, since
the quotient $D_{2m+1}/(D'\oplus T)$ embeds into the discriminant group $A_{D'}$ which is trivial.
For determinant reasons,
we have $T\cong \langle-4\rangle$ contradicting the fact that $D_{2m+1}$ is generated by roots.
}

We 
{continue to prove  the
 main statement of} the lemma by induction on $m$.

For $m=1$, we have, by our convention, $D_3=A_3$ which only allows for $r=1,2$.
That is, $A_1$ and $A_1^2= D_2$ embed into $D_3=A_3$,
and the claim is already there.

For the induction step from $m-1$ to $m$,
we first embed one copy of $A_1$ into $D_{2m+1}$.
By \eqref{eq:D_n}, the remaining copies $A_1^{r-1}$ embed as follows:
\begin{eqnarray}
\label{eq:embed}
A_1^{r-1}
\hookrightarrow
A_1 \oplus D_{2m-1}.
\end{eqnarray}
There are two cases.
If the embedding \eqref{eq:embed} involves the first orthogonal summand of the target lattice,
then 
$$A_1^{r-2}\hookrightarrow D_{2m-1} \;\; \Longrightarrow  \;\;
A_1^{r-2}\hookrightarrow D_{2m-2} \hookrightarrow D_{2m-1}
$$
by the induction hypothesis.
 By \cite{Nishi}[Lemma 4.2 (ii)], $D_{2m-2}$ embeds uniquely, up to isometries, as $\langle d_2,\hdots, d_{2m-1}\rangle$
in the notation of the previous figure.
Hence 
\begin{eqnarray}
\label{eq:case1}
A_1^r \hookrightarrow A_1^2 \oplus D_{2m-2} \hookrightarrow A_1^2 \oplus D_{2m-1} \hookrightarrow D_{2m+1},
\end{eqnarray}
where the middle embedding is primitive (by what we have argued before)
and the primitive closure of $A_1^2 \oplus D_{2m-1}$ inside $D_{2m+1}$ is given, 
in terms of the previous embedding with image
$\langle d_1\rangle\oplus\langle\gamma_{2m+1}\rangle\oplus\langle d_3,\hdots,d_{2m+1}\rangle$,
by adjoining the root $\delta = (d_1+\gamma_{2m+1}+d_{2m}+d_{2m+1})/2$.
In turn,
 the primitive closure of $A_1^2\oplus D_{2m-2}=\langle d_1\rangle\oplus\langle\gamma_{2m+1}\rangle\oplus\langle d_4,\hdots,d_{2m+1}\rangle$  inside $D_{2m+1}$
amounts to adjoining $\delta$, too
(equivalently, one adjoins the root $d_2+d_3$ to  $A_1^2\oplus D_{2m-2}$).
Hence, the primitive closure of
$A_1^2\oplus D_{2m-2}$ inside $D_{2m+1}$ is 
isometric to $D_{2m}$, and the claim of the lemma follows in this case.

If the embedding  \eqref{eq:embed} does not involve the first orthogonal summand of the target lattice,
then along the same lines
$$A_1^{r-1}\hookrightarrow D_{2m-1} \;\; \Longrightarrow  \;\;
A_1^{r-1}\hookrightarrow D_{2m-2} \hookrightarrow D_{2m-1}.
$$
The chain of embeddings \eqref{eq:case1}
has to be modified to
\begin{eqnarray}
\label{eq:case2}
A_1^r \hookrightarrow A_1 + D_{2m-2}  \hookrightarrow A_1^2 + D_{2m-2} \hookrightarrow D_{2m+1},
\end{eqnarray}
but still the second rightmost lattice has primitive closure $D_{2m}$ inside $D_{2m+1}$ as claimed.

\end{proof}

With all Dynkin types covered, the proof of Lemma \ref{lem:roots} is complete.

\end{proof}

\subsection{Conclusion of the proof of Proposition \ref{prop:N_v}}
\label{ss:78}

Lemma \ref{lem:roots} furnishes us with an isotropic subgroup $H\subset M'/M$ of size $4$
whose non-zero elements are represented by roots $v_j\in M'\setminus M$.
Since $H\cong(\ZZ/2\ZZ)^2$ (because $H$ is a subgroup of $A_M\cong (\ZZ/2\ZZ)^r$), there is a relation
\begin{eqnarray}
\label{eq:mod_M}
v_1 + v_2 + v_3 = 0 \mod M.
\end{eqnarray}

For each $j$, we have $2v_j\in M$, so we can 
consider the unique pre-images of these vectors in $\tilde M$:
\[
w_j = 2v_j + m_j F \in \tilde M.
\]
Since $\pi|_{\tilde M}$ is injective, \eqref{eq:mod_M} implies that
\[
w_1 + w_2 + w_3 
\in 2\tilde M.
\]
Hence $(m_1+m_2+m_3)F\in 2\tilde M$,
and since no multiple of $F$ is in $\tilde M$ (because $\tilde M$ is negative-definite),
we infer that $m_1+m_2+m_3=0$.
In particular, one of the $m_j$ is even
and the corresponding vector $w_j\in\tilde M$ is 2-divisible in $\tilde V$.
Applying the inverse reflections $\sigma^{-1}$,
we obtain a linear combination $\sigma^{-1}(w_j)$ of the $C_i$
such that 
$$\sigma^{-1}(w_j)/2\in\Pic(X)\setminus \langle C_1,\hdots, C_r\rangle_\ZZ
$$
is a root.
This contradicts Theorem \ref{thm:nodal}
and thus completes the proof of Proposition \ref{prop:general}. 

\end{proof}

\subsection{Application to higher singularities}

\begin{prop}
\label{prop:A_3}
Let $X\subset \PP^3$ be quartic surface with 12 singular points
such that the minimal resolution $S$ is not supersingular.
Then all singularities have types $A_1$ or $A_2$.
\end{prop}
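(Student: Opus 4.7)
The plan is to argue by contradiction via Proposition \ref{prop:general}: on a non-supersingular K3 in characteristic $2$ there is no quasi-elliptic fibration, since a quasi-elliptic fibration over $\PP^1$ would force unirationality and hence supersingularity. It therefore suffices, under the assumption that some singularity $R$ of $X$ has type different from $A_1$ and $A_2$, to produce a non-zero isotropic class in $\Pic(S)$ orthogonal to $13$ pairwise-disjoint $(-2)$-curves.

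The $13$ disjoint curves are easy to come by: since the Dynkin diagram of $R$ contains $A_3$, I fix an $A_3$-subchain $F_1 - F_2 - F_3$ among $R$'s exceptional curves; the endpoints $F_1, F_3$ are disjoint, and together with one exceptional $(-2)$-curve taken from each of the remaining $11$ singularities they form $13$ pairwise-disjoint $(-2)$-curves on $S$. For the isotropic class, I would set
\[
\alpha = F_1 + 2 F_2 + F_3, \qquad v = H - \alpha,
\]
where $H$ denotes the pullback of the hyperplane class. A direct computation with the $A_3$-intersection matrix yields $\alpha^2 = -4$ together with $\alpha \cdot F_1 = \alpha \cdot F_3 = 0$. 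Since $H$ is orthogonal to every exceptional divisor and $\alpha$ is supported on $R$'s exceptional locus (hence disjoint from the exceptional curves of the other $11$ singularities), the class $v$ satisfies $v^2 = 4 - 4 = 0$ and is perpendicular to all $13$ chosen curves; non-triviality follows from $v \cdot F_2 = 2$.

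Proposition \ref{prop:general} then forces the genus-one fibration induced by $v$ to be quasi-elliptic, contradicting the non-supersingular hypothesis and completing the proof. The one point requiring care is to take only the outer endpoints $F_1, F_3$ from $R$'s Dynkin diagram as representatives in the $13$-tuple: adding a further curve $F_k$ from $R$ would typically destroy $\alpha \cdot F_k = 0$, but since $13$ disjoint curves already suffice to invoke Proposition \ref{prop:general} any extras can be safely dropped. The same $\alpha$ then works uniformly for every higher Dynkin type $A_{n\geq 3}$, $D_{n\geq 4}$, $E_6, E_7, E_8$.
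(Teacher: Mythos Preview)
Your proof is correct and follows essentially the same approach as the paper: the paper also picks an $A_3$-subchain $C,C',C''$ above the higher singularity, sets $E = H - (C+2C'+C'')$, observes that $E$ is isotropic and orthogonal to the $13$ disjoint $(-2)$-curves $C, C'', C_1,\dots,C_{11}$, and then invokes Proposition~\ref{prop:general}. Your additional remarks (the explicit check of non-triviality via $v\cdot F_2=2$, and the caution about not including further curves from $R$) are welcome elaborations but do not change the argument.
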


\begin{proof}
Assume there is a singular point of type $A_n\, (n\geq 3), D_k$ or $E_l$.
Then $S$ contains an $A_3$ configuration of smooth rational curves $C, C', C''$ lying above this singular point
and 11 smooth rational curves $C_1,\hdots,C_{11}$ lying above the other singularities.
Hence there is an isotropic vector
\[
E = H - (C+2C'+C'').
\]
Note that there are 13 disjoint smooth rational curves orthogonal to $E$, namely $C_1,\hdots, C_{11}, C, C''$.
Hence, Proposition \ref{prop:general} gives the contradiction that $S$ is supersingular.
\end{proof}

\begin{prop}
\label{prop:A_2}
Let $X\subset \PP^3$ be quartic surface with 12 singular points
such that the minimal resolution $S$ is not supersingular.
Then there are at most 3 singularities of type $A_2$.
\end{prop}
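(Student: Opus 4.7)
The plan is to mirror the two strategies already carried out earlier in the paper: the fibration-theoretic argument of Lemma \ref{lem:higher} and the application of Proposition \ref{prop:general} in the style of Proposition \ref{prop:A_3}. Assume for contradiction that $X$ carries at least four singularities of type $A_2$; by Proposition \ref{prop:A_3}, the remaining (at least eight) singular points are nodes.

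First I would consider an elliptic fibration $\pi \colon S \to \PP^1$ induced by two of the nodes, say $P$ and $P'$. The remaining ten singular points contribute $10$ disjoint ADE-configurations (four of type $A_2$ and six of type $A_1$) to the fibres of $\pi$. If the line $\overline{PP'}$ passes through a third singular point, then $\overline{PP'} \subset X$ and its strict transform gives an $11$-th disjoint ADE-configuration on the fibres. In this favourable case the proof of Lemma \ref{lem:higher} applies essentially verbatim: Fact \ref{fact} combined with the characteristic-$2$ analysis of $\delta_v$ yields a contradictory numerical estimate of the form
\[
11 \;\leq\; \sum_v N_v^{(i_v)} \;\leq\; \tfrac{1}{2} \sum_v \bigl(e(F_v) + \delta_v - i_v\bigr),
\]
after distinguishing the subcase in which a type-$\mathrm{IV}^*$ fibre supports three of the $A_2$'s.

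The harder case is when $P$ and $P'$ cannot be chosen collinear with a third singular point, so that only $10$ disjoint ADE-configurations appear on the fibres and the inequality above becomes the vacuous $10 \leq 10 + \tfrac{1}{2}\sum_v \delta_v$. In this case I would turn to Proposition \ref{prop:general}, following the template of Proposition \ref{prop:A_3}. The natural candidate isotropic class is
\[
E \;=\; H - E_1 - E_2, \qquad E_i := C_i^{(1)} + C_i^{(2)},
\]
built from the fundamental cycles of two of the $A_2$-singularities. A direct computation gives $E^2 = 0$, and $E$ is orthogonal to $10$ pairwise disjoint $(-2)$-curves (one component of each of the other two $A_2$-configurations, plus all eight node exceptionals).

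The main obstacle is to raise this count from $10$ to the $13$ disjoint $(-2)$-curves required by Proposition \ref{prop:general}. Since any set of pairwise disjoint $(-2)$-curves among the exceptional divisors has at most $12$ elements (at most one curve per ADE-configuration), the three extra curves must arise as strict transforms of rational curves on $X$ --- most naturally, lines joining three or more singular points. Establishing their existence amounts to a combinatorial incidence analysis in the spirit of Claims \ref{claim} and \ref{claim2}, now adapted to the mixed node/$A_2$ setting: one must show that four $A_2$-points together with at least eight nodes on a normal quartic in $\PP^3$ always exhibit enough collinear triples to produce such a line. Once such a line is found, either it directly supplies an extra $(-2)$-curve orthogonal to $E$ and disjoint from the chosen ten (completing the Proposition \ref{prop:general} argument), or it allows us to reselect the pair $P, P'$ as two nodes collinear with a third singular point, reducing to the $11$-configuration case handled above. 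This incidence argument is the technical heart of the proof.
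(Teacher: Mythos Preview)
Your proposal has genuine gaps in both branches.

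In the ``favourable'' case, the strict transform of $\overline{PP'}$ is \emph{not} disjoint from the ten configurations already on the fibres: it meets the exceptional locus over the third singular point lying on that line. So you still have only ten disjoint ADE-configurations supported on the fibres, and the estimate from Fact~\ref{fact} collapses to $10 \le \tfrac12(24-4)=10$, which is vacuous---exactly as in your ``harder'' case.

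In the second branch, the isotropic class $E=H-E_1-E_2$ is orthogonal to only ten disjoint $(-2)$-curves, as you note. The incidence analysis you propose to manufacture three more is not carried out, and there is no reason to expect it to succeed: any line through singular points has strict transform meeting the exceptional divisors over those points, so such lines cannot produce curves disjoint from the configurations you already have.

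The paper sidesteps both problems by a different---and much sharper---choice of isotropic vector, built from \emph{three} of the $A_2$-configurations (with exceptional curves $C_1,C_2;\,C_1',C_2';\,C_1'',C_2''$) rather than two:
\[
E \;=\; 3H \;+\; (C_1+2C_2)\;+\;(C_1'+2C_2')\;+\;2(C_1''+2C_2'').
\]
One checks $E^2=0$ and $E.C_1=E.C_1'=E.C_1''=0$. Hence $E$ is orthogonal to $C_1,C_1',C_1''$, to all eight node exceptionals, and to both components of the \emph{fourth} $A_2$---twelve pairwise disjoint ADE-configurations in all, eleven of type $A_1$ and one of type $A_2$. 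The machinery of Proposition~\ref{prop:N_v} places these on the fibres of the induced elliptic fibration, and Proposition~\ref{lem:12}\,(iii) then gives the contradiction directly. No collinearity or incidence argument is needed; the whole point is to engineer the isotropic class so that one intact $A_2$ survives in its orthogonal complement alongside eleven $A_1$'s.
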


\begin{proof}
Assume that there are 4 singular points of type $A_2$ (or higher).
Denote the exceptional curves in $S$ lying above 3 of them by
\[
C_1, C_2, \;\; C_1', C_2', \;\; C_1'', C_2''.
\]
Consider the isotropic vector
\[
E = 3H + (C_1+2C_2) + (C_1'+2C_2') + 2(C_1''+2C_2'').
\]
Then $|E|$ induces an elliptic fibration on $S$,
and by the proof of Proposition \ref{prop:N_v},
the fibres contain orthogonal configurations of smooth rational curves of types
$A_2$ (lying above the fourth $A_2$ singularity) and
$A_1^{11}$ ($C_1, C_1', C_1''$ and exceptional curves above the remaining 8 singular points).
But  this is excluded by Proposition \ref{lem:12} (iii).
\end{proof}

\subsection{Proof of Theorem \ref{thm}}

By Theorem \ref{theo}, a normal quartic surface $X\subset\PP^3$ 
whose minimal resolution is not a supersingular K3 surface
contains at most 13 singular points.
We ruled out 13 nodes in \ref{ss:13nodes} and other configurations of rational double points in \ref{ss:vs}.
This proves that $X$ contains at most 12 singular points.
By Proposition \ref{prop:A_3}, they have types $A_1$ or $A_2$,
and by Proposition \ref{prop:A_2} there are at most 3 $A_2$'s.

The generic member of the family in Example \ref{ex:12} contains 12 nodes;
its minimal resolution is a K3 surface which is not  supersingular.
The family has dimension 22 in the space of quartics.
This completes the proof of Theorem \ref{thm}.
\qed

\section{General results}

Let us draw some consequences of Proposition \ref{prop:general}.
The first one is very much in line with Theorem \ref{theo}.

\begin{cor}
\label{cor2}
Let $X$ be a K3 surface in characteristic $2$.
Fix a positive vector $h\in\Pic(X)$ with square $h^2=2n>0$
and assume that $n$ is the sum of $r$ squares.
Then $X$ contains no more than $12+r$  $(-2)$-curves orthogonal to $h$
unless  $X$ is  supersingular.
\end{cor}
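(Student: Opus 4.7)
The plan is to reduce the statement to Proposition \ref{prop:general} by exhibiting a suitable non-zero isotropic class. Recall from the discussion in Section \ref{s:g=1} that a K3 surface in characteristic $2$ admitting a quasi-elliptic fibration is unirational and hence supersingular; in view of this, Proposition \ref{prop:general} asserts that a non-supersingular K3 surface in characteristic $2$ cannot carry a non-zero isotropic class in $\Pic(X)$ that is orthogonal to $13$ mutually disjoint $(-2)$-curves. The goal is therefore to produce such an isotropic class starting from $13+r$ disjoint $(-2)$-curves orthogonal to $h$.

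Concretely, I would argue by contradiction. Assume $X$ is not supersingular and that $C_1,\ldots,C_{13+r}$ are disjoint $(-2)$-curves, each orthogonal to $h$. Fix a representation $n = a_1^2 + \cdots + a_r^2$ as a sum of $r$ squares (allowing some $a_i$ to vanish) and set
\[
E \;=\; h \;-\; \sum_{i=1}^r a_i\, C_i \;\in\; \Pic(X).
\]
Using $h \cdot C_i = 0$, $C_i^2 = -2$ and $C_i \cdot C_j = 0$ for $i \neq j$, a direct computation yields
\[
E^2 \;=\; h^2 \,+\, \sum_{i=1}^r a_i^2\, C_i^2 \;=\; 2n - 2\sum_{i=1}^r a_i^2 \;=\; 0,
\]
so $E$ is isotropic. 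Non-vanishing of $E$ follows since the sublattice $\langle C_1,\ldots,C_r\rangle$ is negative-definite, whereas $h^2 = 2n > 0$, so $h$ cannot be written as a $\ZZ$-linear combination of the $C_i$. Finally, for every index $j > r$ the same orthogonality and disjointness relations give $E \cdot C_j = 0$, which exhibits $13$ disjoint $(-2)$-curves $C_{r+1},\ldots,C_{13+r}$ orthogonal to $E$.

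Applying Proposition \ref{prop:general} to the non-zero isotropic class $E$ then forces the induced genus one fibration to be quasi-elliptic, contradicting the assumption that $X$ is not supersingular. There is no genuine technical obstacle: the argument is essentially a change of variables driven by the sum-of-squares decomposition of $n$. The only bookkeeping point worth flagging is that if some $a_i$ vanish, strictly fewer than $r$ curves appear in the definition of $E$, which only enlarges the set of $(-2)$-curves orthogonal to $E$ beyond the required $13$ and therefore strengthens rather than weakens the input to Proposition \ref{prop:general}.
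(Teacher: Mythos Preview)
Your proof is correct and follows essentially the same approach as the paper: form the isotropic class $E = h - \sum_{i=1}^r a_i C_i$, observe that the remaining $13$ disjoint $(-2)$-curves are orthogonal to $E$, and invoke Proposition \ref{prop:general} to conclude that the induced fibration is quasi-elliptic and hence $X$ is supersingular. Your write-up is slightly more explicit (verifying $E^2=0$, $E\neq 0$, and handling vanishing $a_i$), but the argument is the same.
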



\begin{proof}
Assume that $X$ contains $m=13+r$ $(-2)$-curves $C_1,...,C_m$.
Writing $n=a_1^2+\hdots+a_r^2$,
we obtain the isotropic vector
\[
E = h - a_1 C_1 - \hdots - a_r C_r.
\]
This is perpendicular to the 13 $(-2)$-curves $C_{r+1},\hdots,C_m$,
so the genus one fibration induced by $|E|$ is quasi-elliptic
by Proposition \ref{prop:general}.
Equivalently, by \cite{rudakov-shafarevich}, $X$ is supersingular.
\end{proof}

In comparison with Theorem \ref{theo}, 
Corollary \ref{cor2} affords for much more flexibility as $h$ is not required to give a quasi-polarization;
also the $(-2)$-curves need not correspond to simple nodes
(e.g., we could also take two curves from an $A_3$-configuration or three curves from a $D_4$-configuration).

\subsection{Proof of Theorem \ref{theo2}}

As a second corollary, we indicate the proof of Theorem \ref{theo2}.
Assume that $X$ contains 13 $(-2)$-curves $C_1,\hdots,C_{13}$.
Their orthogonal complement in $\Pic(X)$ is a hyperbolic lattice of rank at least 5,
thus it represents zero by some non-trivial vector $E$ by Meyer's theorem  \cite[IV.3.2, cor.\ 2 to th.\ 8]{Serre:CA}).
By Proposition \ref{prop:general}, the genus one fibration induced by $|E|$
is quasi-elliptic.
But then $X$ is unirational, so $\rho(X)=22$, contradiction.
\qed

\begin{rem}
The result is sharp by virtue of the fibrations from \cite{shioda}.
\end{rem}

{
We can also prove that the 12 disjoint $(-2)$-curves in Theorem \ref{theo2} cannot be extended:

\begin{prop}
\label{prop:18-20}
Let $X$ be a K3 surface in characteristic $2$
with Picard number $18\leq \rho(X)\leq 20$.
If $X$ contains  12 disjoint ADE configurations,
then each has type $A_1$.
\end{prop}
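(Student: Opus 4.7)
The plan is to argue by contradiction: suppose $X$ carries $12$ disjoint ADE configurations of $(-2)$-curves, at least one not of type $A_1$, and derive a contradiction. The first observation is a reduction to the $A_2$ case: if some configuration has type $A_n$ for $n\ge 3$, $D_n$, or $E_n$, then its Dynkin diagram contains an $A_3$ subchain, furnishing two disjoint $(-2)$-curves (the two extremities of the chain). Together with one $(-2)$-curve picked from each of the other $11$ configurations, this yields $13$ pairwise disjoint smooth rational curves on $X$, contradicting Theorem~\ref{theo2}. Hence every configuration must be $A_1$ or $A_2$.

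\emph{Ruling out $A_2$ via Proposition~\ref{lem:12}(iii).} Suppose one configuration is $A_2$, realized by curves $(C_1,C_2)$ with $C_1\cdot C_2 = 1$, and pick a $(-2)$-curve $D_i$ from each of the other $11$ configurations. The sublattice
\[
M_0 := \langle C_1,C_2,D_1,\ldots,D_{11}\rangle_{\ZZ} \subset \Pic(X)
\]
is isometric to $A_2 \oplus A_1^{11}$, negative definite of rank $13$; so its orthogonal complement $M_0^{\perp} \subset \Pic(X)$ is an even indefinite lattice of signature $(1,\rho(X)-14)$ and rank $\rho(X)-13\ge 5$. By Meyer's theorem (as invoked in the proof of Theorem~\ref{theo2}), $M_0^{\perp}$ contains a non-zero isotropic vector $E$. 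Since $\rho(X)\le 20<22$, $X$ is not supersingular, so the genus one fibration induced by $|E|$ is elliptic. Mimicking the proof of Proposition~\ref{prop:A_2}, I pass from $E$ to a fiber class $F=\sigma(E)$ by a composition of reflections $\sigma$ in $(-2)$-curves; the reflected classes $B_{C_j}:=\sigma(C_j)$ and $B_{D_i}:=\sigma(D_i)$ remain $(-2)$-divisors with the same intersection pattern $A_2\oplus A_1^{11}$, and being orthogonal to $F$ they are supported on the fibres. This exhibits $12$ disjoint ADE configurations on the fibres, one of type $A_2$, which contradicts Proposition~\ref{lem:12}(iii).

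\emph{Main obstacle.} The delicate step is the passage from reflections of $(-2)$-curves to $(-2)$-divisors supported on fibres while maintaining the validity of Proposition~\ref{lem:12}(iii), originally stated for ADE configurations of smooth rational curves. This is the same move already used in the proof of Proposition~\ref{prop:A_2}, resting on the root-lattice bound $N_v^{(i)} \le (e(F_v)+\delta_v-i)/2$ from Fact~\ref{fact}. Justifying that this bound still forces every configuration to be $A_1$ when applied to the reflected $(-2)$-divisor configurations is where I expect the main technical work to lie.
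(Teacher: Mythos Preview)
Your proposal is correct and follows essentially the same route as the paper's proof. The paper's argument is just more terse: it observes directly that any ADE configuration with at least two components contains an $A_2$ (any two adjacent curves), so your first reduction via $A_3$ subchains and Theorem~\ref{theo2} is unnecessary (though not wrong). From there both proofs proceed identically: extract a subconfiguration of type $A_1^{11}+A_2$, note that its orthogonal complement in $\Pic(X)$ has rank $\rho(X)-13\geq 5$, invoke Meyer's theorem to get a non-trivial isotropic vector, observe that $\rho(X)\leq 20$ forces the induced fibration to be elliptic, and then appeal to Proposition~\ref{lem:12}(iii).

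The ``main obstacle'' you flag---that Proposition~\ref{lem:12}(iii) is stated for configurations of smooth rational curves, while after reflecting to a fibre class one only has $(-2)$-divisors supported on fibres---is a genuine subtlety, and the paper treats it with the same brevity you do. The justification is the machinery of Proposition~\ref{prop:N_v}: the reflected $(-2)$-divisors embed into the fibre root lattices with the same intersection pattern, and the fibre-wise bounds (including the refinement in Fact~\ref{fact} for $A_2$ summands) are lattice-theoretic, so they apply equally to such divisors arising from disjoint $(-2)$-curves via reflections. This is precisely how the paper argues in the proof of Proposition~\ref{prop:A_2}, which you correctly cite as the template.
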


\begin{proof}
Assume that at least one configuration has two components
and consider a subconfiguration of type $A_1^{11}+A_2$.
As in the proof of Theorem \ref{theo2}, its orthogonal complement 
has rank at least $5$, so the subconfiguration is supported on the singular fibres
of some elliptic fibration.
However,  this is impossible by Proposition \ref{lem:12} (iii).
\end{proof}
}

\begin{rem}
Note the following consequence of Proposition \ref{prop:18-20}: 
if $S$ in Theorem \ref{thm} has Picard number $18, 19$ or $20$,
then we can strengthen the statement to the extent that all 12 singular points of $X$ necessarily are nodes.
\end{rem}

\end{document}